\newcommand{\globalcolor}[1]{%
  \color{#1}\global\let\default@color\current@color
}
\newif\ifdark
\definecolor{darkred}{rgb}{0.9,0.2,0.2}
\definecolor{darkblue}{rgb}{0.7,0.3,1}
\definecolor{darkgreen}{rgb}{0.1,0.9,0.1}
\definecolor{franck}{rgb}{0,0.8,1}
\definecolor{pagebackground}{rgb}{.15,.21,.18}
\definecolor{pageforeground}{rgb}{.84,.84,.85}
\definecolor{symbols}{rgb}{0,0.7,1}
\colorlet{connection}{red!80!black}
\colorlet{boxcolor}{blue!50}
\definecolor{darkred}{rgb}{0.7,0.1,0.1}
\definecolor{darkblue}{rgb}{0.4,0.1,0.8}
\definecolor{darkgreen}{rgb}{0.1,0.7,0.1}
\definecolor{franck}{rgb}{0,0,1}
\definecolor{pagebackground}{rgb}{1,1,1}
\definecolor{pageforeground}{rgb}{0,0,0}
\colorlet{symbols}{blue!90!black}
\colorlet{connection}{red!30!black}
\colorlet{boxcolor}{blue!50!black}
\def\slash{\leavevmode\unskip\kern0.18em/\penalty\exhyphenpenalty\kern0.18em}
\def\dash{\leavevmode\unskip\kern0.18em--\penalty\exhyphenpenalty\kern0.18em}
\DeclareMathAlphabet{\mathbbm}{U}{bbm}{m}{n}
\DeclareFontFamily{U}{BOONDOX-calo}{\skewchar\font=45 }
\DeclareFontShape{U}{BOONDOX-calo}{m}{n}{
  <-> s*[1.05] BOONDOX-r-calo}{}
\DeclareFontShape{U}{BOONDOX-calo}{b}{n}{
  <-> s*[1.05] BOONDOX-b-calo}{}
\DeclareMathAlphabet{\mcb}{U}{BOONDOX-calo}{m}{n}
\SetMathAlphabet{\mcb}{bold}{U}{BOONDOX-calo}{b}{n}
\setlist{noitemsep,topsep=4pt,leftmargin=1.5em}
\DeclareMathAlphabet{\mathbbm}{U}{bbm}{m}{n}
\DeclareMathAlphabet{\mcb}{U}{BOONDOX-calo}{m}{n}
\SetMathAlphabet{\mcb}{bold}{U}{BOONDOX-calo}{b}{n}
\DeclareFontFamily{U}{mathx}{\hyphenchar\font45}
\DeclareFontShape{U}{mathx}{m}{n}{
      <5> <6> <7> <8> <9> <10>
      <10.95> <12> <14.4> <17.28> <20.74> <24.88>
      mathx10
      }{}
\DeclareSymbolFont{mathx}{U}{mathx}{m}{n}
\DeclareMathSymbol{\bigtimes}{1}{mathx}{"91}
\providecommand{\figures}{false}
{ \ifthenelse{\equal{\figures}{false}} {#1}{\[ {\rm Figure \ missing !} \]} }{}
\def\id{\mathrm{id}}
\def\CH{\mathcal{H}}
\def\CG{\mathcal{G}}
\def\CA{\mathcal{A}}
\def\CC{\mathcal{C}}
\def\CB{\mathcal{B}}
\def\CT{\mathcal{T}}
\tikzstyle{tinydots}=[dash pattern=on \pgflinewidth off \pgflinewidth]
\tikzstyle{superdense}=[dash pattern=on 4pt off 1pt]
\newcommand{\BR}{\bf{BRP}}
\newcommand{\AN}{\bf{ARP}}
\def\Deltam{\Delta^{\!-}}
\newcommand*{\shuffle}
{{\,\begin{sideways}\begin{sideways}\begin{sideways}
$\tiny{\exists}$\end{sideways}\end{sideways}\end{sideways}\,}}
\def\${|\!|\!|}
\newenvironment{DIFnomarkup}{}{} % see man latexdiff
\newfont{\indic}{bbmss12}
\def\Nabla_#1{\nabla_{\!#1}}
    \pgfmathsetlength{\pgf@xb}{\pgfkeysvalueof{/pgf/outer xsep}}%  
    \pgfmathsetlength{\pgf@yb}{\pgfkeysvalueof{/pgf/outer ysep}}%  
\def\symbol#1{\textcolor{symbols}{#1}}
\def\decorate#1#2{
        \ifnum#2>0
    		\foreach \count in {1,...,#2}{
	       	let
				\p1 = (sourcenode.center),
                \p2 = (sourcenode.east),
				\n1 = {\x2-\x1},
				\n2 = {1mm},
				\n3 = {(1.3+0.6*(\count-1))*\n1},
				\n4 = {0.7*\n1}
			in 
        		node[rectangle,fill=symbols,rotate=30,inner sep=0pt,minimum width=0.2*\n2,minimum height=\n2] at ($(sourcenode.center) + (\n3,\n4)$) {}
				}
		\fi
        \ifnum#1>0
    		\foreach \count in {1,...,#1}{
	       	let
				\p1 = (sourcenode.center),
                \p2 = (sourcenode.east),
				\n1 = {\x2-\x1},
				\n2 = {1mm},
				\n3 = {(1.3+0.6*(\count-1))*\n1},
				\n4 = {0.7*\n1}
			in 
        		node[rectangle,fill=symbols,rotate=-30,inner sep=0pt,minimum width=0.2*\n2,minimum height=\n2] at ($(sourcenode.center) + (-\n3,\n4)$) {}
				}
		\fi
}
\tikzset{
    dectriangle/.style 2 args={
        triangle,
        alias=sourcenode,
        append after command={\decorate{#1}{#2}}
    },
    dectriangle/.default={0}{0},
}
\tikzset{
	cross/.style={path picture={ 
  		\draw[symbols]
			(path picture bounding box.south east) -- (path picture bounding box.north west) (path picture bounding box.south west) -- (path picture bounding box.north east);
		}},
root/.style={circle,fill=green!50!black,inner sep=0pt, minimum size=1.2mm},
        dot/.style={circle,fill=pageforeground,inner sep=0pt, minimum size=1mm},
        blank/.style={circle,fill=white,inner sep=0pt, minimum size=1mm},
        dotred/.style={circle,fill=pageforeground!50!pagebackground,inner sep=0pt, minimum size=2mm},
        var/.style={circle,fill=pageforeground!10!pagebackground,draw=pageforeground,inner sep=0pt, minimum size=3mm},
        kernel/.style={semithick,shorten >=2pt,shorten <=2pt},
        kernels/.style={snake=zigzag,shorten >=2pt,shorten <=2pt,segment amplitude=1pt,segment length=4pt,line before snake=2pt,line after snake=5pt,},
        rho/.style={densely dashed,semithick,shorten >=2pt,shorten <=2pt},
           testfcn/.style={dotted,semithick,shorten >=2pt,shorten <=2pt},
        renorm/.style={shape=circle,fill=pagebackground,inner sep=1pt},
        labl/.style={shape=rectangle,fill=pagebackground,inner sep=1pt},
        xic/.style={very thin,circle,draw=symbols,fill=symbols,inner sep=0pt,minimum size=1.2mm},
        g/.style={very thin,rectangle,draw=symbols,fill=symbols!10!pagebackground,inner sep=0pt,minimum width=2.5mm,minimum height=1.2mm},
        xi/.style={very thin,circle,draw=symbols,fill=symbols!10!pagebackground,inner sep=0pt,minimum size=1.2mm},
	xies/.style={very thin,rectangle,fill=green!50!black!25,draw=symbols,inner sep=0pt,minimum size=1.1mm},
	xiesf/.style={very thin,rectangle,fill=green!50!black,draw=symbols,inner sep=0pt,minimum size=1.1mm},
        xix/.style={very thin,crosscircle,fill=symbols!10!pagebackground,draw=symbols,inner sep=0pt,minimum size=1.2mm},
        X/.style={very thin,cross,rectangle,fill=pagebackground,draw=symbols,inner sep=0pt,minimum size=1.2mm},
	xib/.style={thin,circle,fill=symbols!10!pagebackground,draw=symbols,inner sep=0pt,minimum size=1.6mm},
	xie/.style={thin,circle,fill=green!50!black,draw=symbols,inner sep=0pt,minimum size=1.6mm},
	xid/.style={thin,circle,fill=symbols,draw=symbols,inner sep=0pt,minimum size=1.6mm},
	xibx/.style={thin,crosscircle,fill=symbols!10!pagebackground,draw=symbols,inner sep=0pt,minimum size=1.6mm},
	kernels2/.style={very thick,draw=connection,segment length=12pt},
	keps/.style={thin,draw=symbols,->},
	kepspr/.style={thick,draw=connection,->},
	krho/.style={thin,draw=symbols,superdense,->},
	krhopr/.style={thick,draw=connection,superdense,->},
	triangle/.style = { regular polygon, regular polygon sides=3},
	not/.style={thin,circle,draw=connection,fill=connection,inner sep=0pt,minimum size=0.5mm},
	diff/.style = {very thin,draw=symbols,triangle,fill=red!50!black,inner sep=0pt,minimum size=1.6mm},
	diff1/.style = {very thin,dectriangle={1}{0},fill=red!50!black,draw=symbols,inner sep=0pt,minimum size=1.6mm},
	diff2/.style = {very thin,dectriangle={1}{1},fill=red!50!black,draw=symbols,inner sep=0pt,minimum size=1.6mm},
		diffmini/.style = {very thin,rectangle,fill=black,draw=black,inner sep=0pt,minimum size=0.75mm},
	 kernelsmod/.style={very thick,draw=connection,segment length=12pt},
	 rec/.style = {very thin,rectangle,fill=black,draw=black,inner sep=0pt,minimum size=2mm},
	cerc/.style={very thin,circle,draw=black,fill=symbols,inner sep=0pt,minimum size=2mm},
	stars/.style={very thin,star,star points=6,star point ratio=0.5, draw=black,fill=red,inner sep=0pt,minimum size=0.7mm},
	>=stealth,
        }
\def\DeclareSymbol#1#2#3{%
	\expandafter\gdef\csname MH@symb@#1\endcsname{\tikzsetnextfilename{symbol#1}%
	\tikz[baseline=#2,scale=0.15,draw=symbols,line join=round]{#3}}%
	\expandafter\gdef\csname MH@symb@#1s\endcsname{\scalebox{0.75}{\tikzsetnextfilename{symbol#1}%
	\tikz[baseline=#2,scale=0.15,draw=symbols,line join=round]{#3}}}%
	\expandafter\gdef\csname MH@symb@#1ss\endcsname{\scalebox{0.65}{\tikzsetnextfilename{symbol#1}%
	\tikz[baseline=#2,scale=0.15,draw=symbols,line join=round]{#3}}}%
	}
\def\<#1>{\ifthenelse{\boolean{mmode}}{\mathchoice{\csname MH@symb@#1\endcsname}{\csname MH@symb@#1\endcsname}{\csname MH@symb@#1s\endcsname}{\csname MH@symb@#1ss\endcsname}}{\csname MH@symb@#1\endcsname}}
 \def\1{\mathbf{\symbol{1}}}
\def\one{\mathbf{1}}
\DeclareMathAlphabet{\mathpzc}{OT1}{pzc}{m}{it}
\def\eqref#1{(\ref{#1})}
\newcommand*{\bigcdot}{}% Check if undefined
\DeclareRobustCommand*{\bigcdot}{%
  \mathbin{\mathpalette\bigcdot@{}}%
}
\newcommand*{\bigcdot@scalefactor}{.5}
\newcommand*{\bigcdot@widthfactor}{1.15}
\newcommand*{\bigcdot@}[2]{%
  % #1: math style
  % #2: unused
  \sbox0{$#1\vcenter{}$}% math axis
  \sbox2{$#1\cdot\m@th$}%
  \hbox to \bigcdot@widthfactor\wd2{%
    \hfil
    \raise\ht0\hbox{%
      \scalebox{\bigcdot@scalefactor}{%
        \lower\ht0\hbox{$#1\bullet\m@th$}%
      }%
    }%
    \hfil
  }%
}
\def\two{{\<generic>\kern0.05em\<genericb>}}
\def\twoI{{\<Ito>\kern0.05em\<Itob>}}
\def\mail#1{\burlalt{#1}{mailto:#1}}
\begin{document}

\title{Renormalisation from non-geometric to geometric rough paths}
\author{Yvain Bruned}
\institute{University of Edinburgh,
\, 
\mail{Yvain.Bruned@ed.ac.uk}.
}

\maketitle

\begin{abstract}
The Hairer-Kelly map has been introduced for establishing a correspondence between geometric and non-geometric rough paths.
Recently, a new renormalisation on rough paths has been proposed in \cite{TZ},  built on this map and the Lyons-Victoir extension theorem. In this work, we compare this renormalisation with the existing ones such as BPHZ  and the local products renormalisations. We prove that they commute in a certain sense with the Hairer-Kelly map and exhibit an explicit formula in the framework of \cite{TZ}. We also see how the renormalisation behaves in the alternative approach in \cite{BC2019} for moving from non-geometric to geometric rough paths.

\end{abstract}

\setcounter{tocdepth}{1}
\tableofcontents

\section{Introduction}

Renormalisation plays a central role in the theory of  singular stochastic partial differential equations (SPDEs). Since the foundation paper \cite{reg} of Martin Hairer establishing the theory of Regularity Structures, it has been understood that the mains objects describing the solution of a singular SPDE need to be renormalised and that they reflect the ill-defined distributional products appearing in the equation. The solution is described locally via recentered iterated integrals called model.
This representation is directly inspired by rough paths \cite{Lyons98} and controlled rough paths \cite{Gubinelli2004}. The iterated integrals are constructed from characters on a combinatorial Hopf algebra. The algebra at play in regularity structures is the one of decorated rooted trees 
which is close to the branched rough paths introduced in \cite{Gub06}. Equipped with a deformation of a Butcher-Connes-Kreimer  coproduct \cite{Butcher,CK}, it forms a Hopf algebra. 
The renormalisation chosen for the model is the BPHZ renormalisation coming from perturbative quantum field theory \cite{BP,Hepp,Zimmermann}. It is implemented using an extraction/contraction Hopf algebra which cointeracts with the Butcher-Connes-Kreimer one. This cointeraction has been first noticed in numerical analysis for B-series \cite{MR2657947} and at the coproducts level in \cite{CA}. Its extension to deformed structures is one of the main results of \cite{BHZ}. The convergence of the renormalised model has been established in \cite{ajay} and 
\cite{BCCH} describes the renormalised equations which allow the resolution of a large class of singular SPDEs. The papers \cite{EMS,BaiHos} give surveys on these developments. For an introduction to Regularity Structures, one can consult the textbook \cite{FrizHai} by Friz and Hairer.

Meanwhile, it has been noticed that the BPHZ renormalisation has a nice counterpart in rough paths theory \cite{BCFP}. Some examples where renormalisation is needed in singular stochastic differential equations (SDEs) are given in \cite{BCF}. The construction of \cite{BCFP} is based on a primal point of view where the renormalisation is viewed as translation maps acting on Lie series.
In \cite{TZ}, the authors proposed a rather new renormalisation which is based on the Hairer-Kelly map introduced in \cite{HK15}. The idea of the latter paper was to fill the gap between two representations for rough paths where various Hopf algebras can be used. The Hairer-Kelly map allows us to move from the Butcher-Connes-Kreimer Hopf algebra (resp. non-geometric rough paths also called branched rough paths) to the tensor Hopf algebra (resp. geometric rough paths).
This change of point of view relies strongly on the Lyons-Victoir extension theorem \cite{LV07} which is a way to lift a path to a rough path. 
In \cite{TZ}, the renormalisation is performed before applying the extension theorem by adding the increment of a Hölder function $ (g^{\tau})_{\tau} $ indexed by the rooted trees used for the branched rough paths. With this parametrisation, they obtain a bijection between branched $ \gamma $-rough paths and anisotropic $ \gamma $-rough paths. They make a link with the renormalisation in \cite{BCFP} by proving a recursive definition for the $ g $. An explicit formula for the $ g $ is missing.

The main contribution of this paper is to make the link between the two renormalisations more precise by providing an explicit formula for the $ g $. We also introduce a new renormalisation on branched rough paths which is inspired from \cite{Br17}. It is based on local products renormalisation and has been recently used  in \cite{CMW19}.
An explicit formula is also given for this renormalisation. We then investigate a different construction from the Hairer-Kelly map given in \cite{BC2019} where the authors proposed a new isomorphism between the two classes of rough paths. This construction is based on an isomorphism given by Chapoton and Foissy \cite{Foi02,cha10}. 
Then, one can bypass the use of the non-canonical construction provided by the Hairer-Kelly map. We see how the renormalisation behaves toward this construction and we obtain nicer formulae in this context.

Let us outline the paper by summarising the content of its sections. In Section~\ref{sec::2}, we introduce the different Hopf algebras and spaces of rough paths considered in this paper:
the branched $ \gamma $-rough paths on the Butcher-Connes-Kreimer Hopf algebra and the  anisotropic $ \gamma $-rough paths on the tensor Hopf algebra. We recall also the main theorems at play for the sequel like the Lyons-Victoir theorem, the Haire-Kelly construction with its central map $ \psi $, the renormalisation given in \cite{TZ} and the isomorphism $ \Psi $ between the two spaces of rough paths given in \cite{BC2019}.
In Section~\ref{sec::3}, we present a general family of renormalisation maps $ M $ satisfying suitable algebraic and analytical properties in order to act on branched rough paths. We see how they commute with the  maps $ \psi $ and $ \Psi $ and show that 
\begin{equs} \label{alg_ident}
\psi(M) = \bar M \psi, \quad \Psi M^{*} = \tilde{M}^{*} \Psi.
\end{equs}
where the map $ M^{*} $ is the adjoint of $ M $. The maps $ \bar M^{*} $ and $ \tilde M^{*} $ can be interpreted as translation maps as in \cite{BCFP} on suitable tensor algebra spaces.
Then, we check that the renormalisation given in \cite{BCFP} and the one inspired from \cite{Br17} enter this framework. At the end, we get the following identity:
\begin{equs} \label{rough_path_ident}
\langle M^{*}  X_{st},\tau \rangle = \langle \bar M^{*}  \bar X_{st},\psi(\tau) \rangle
\end{equs}
where $\bar X $ is the rough path associated to $ X $ in \cite{TZ}.
Moreover, $ \tilde{M}^{*} $ acts on $ \tilde X $ the rough path associated to $ X $ in  \cite{BC2019}.
 The identity \eqref{alg_ident} is an algebraic interaction between the renormalisation map $ M $ and the maps $ \psi $ and $ \Psi $. Then, \eqref{rough_path_ident} shows how this interaction can be viewed at the level of the rough paths.

 Section~\ref{sec::4} contains the main results of the paper namely an explicit expression for the $ g $ given in Theorem~\ref{formula1} for the renormalisation maps $ M $ introduced in Section~\ref{sec::3}
\begin{equs} \label{formu1}
  g_t^\tau-g_s^\tau= \langle \overline{M^{*}  X_{st}},\tau \rangle - \langle \bar X_{st},\tau\rangle.
\end{equs}
  It is unclear if one can go further  and prove the stronger identity
  \begin{equs} \label{formu12}
  g_t^\tau-g_s^\tau= \langle \bar{M}^{*}  \bar{X}_{st},\tau \rangle - \langle \bar X_{st},\tau\rangle.
\end{equs}
This new formula depends whether one can prove that
\begin{equs}
\overline{M^{*}  X_{st}} = \bar{M}^{*} \bar X
\end{equs}
which can be interpreted as showing a commutation between $ M $ and the non-canonical extension provided by the Lyons-Victoir theorem. In contrast, such identity is true and easy to obtain for the approach advocated in \cite{BC2019}. Indeed, one gets from Theorem~\ref{commute_iso}
\begin{equs}
\Psi(M^{\star} X) =  \tilde{M}^{*} \Psi( X).
\end{equs}
We conclude this introduction by saying that such transfer of structures and renormalisation maps must have a counterpart at the level of Regularity Structures  where most of the objects presented here are at play. A different construction of the model based on a different Hopf algebra is missing and could be investigated in the future. Such program has been started in numerical analysis in  \cite{Murua2017} where the authors consider words instead of trees. Regularity structures trees appear in the recent work \cite{BS} for dispersive PDEs. 
Therefore, a tensor structure seems plausible in the context of singular SPDEs.

\section{Rough Paths setting}

\label{sec::2}

In this section, we present the definitions and propositions needed in the sequel. They are mainly extracted from \cite{TZ}.
Let $ \CT $ the set of rooted trees with nodes decorated by $\{0,...,d\}$. 
We grade elements of $ \tau \in \CT $  by the number $|\tau|$ of their nodes and we set
\[
\CT_n:=\{\tau\in\CT: |\tau|\leq n\},
\qquad n\in\N.
\]
We denote by 
 $\CF$ be the set of  forests composed of trees in $ \CT $.  The set $ \CF_N $ corresponds of forests of size $ N $ in the sense that $ \tau_1 \cdots \tau_n \in \CF_N$ satisfied $ \sum_{i=1}^n |\tau_i| \leq N $.  Any rooted tree $\tau \in \CT$, different from the empty tree, $\one$, can be written in terms of the $B^{i}_+$-operators, $ i \in  \{0,...,d\}$. Indeed,  we have that $\tau = B^{i}_+(\tau_1 \cdots \tau_n)$, which connects the roots of the trees in the forest $\tau_1 \cdots \tau_n \in \CF$ to a new root decorated by $ i $.
We define $\CH = \langle \CF \rangle$ as the linear span of $\CF$. One can endow this vector space with a Hopf algebra structure where the product is given by the forest product. The coproduct is given by the Butcher-Connes-Kreimer coproduct:
 \begin{equs} \label{Connes-Kreimer}
 \Delta (\tau) 
	= \mathbf{1} \otimes \tau + (B^{i}_+ \otimes \mathrm{id})\Delta (\tau_1 \cdots \tau_n).
 \end{equs}
We denote by $ \CG $ (resp. $ \CG_N $) the set of characters from the Hopf algebra $\CH $ (resp. $\CH_N = \langle \CF_N \rangle $) into $\R$. These are linear algebra morphisms forming a group with respect to the convolution product $ \star $ with inverse given by the antipode $ \CA $
\begin{equs}
\label{convolution}
	X \star Y :=  (X \otimes Y ) \Delta, \qquad X^{-1} = X \circ \CA 
\end{equs}
The unit for the convolution product is the co-unit $\mathbf{1}^*$ which is non zero only on the empty tree.
Let $\gamma\in\,]0,1[$, a branched $\gamma$-rough path is a path $X:[0,1]^2\to\cal G$ such that $X_{tt}=\one^{*}$, it satisfies Chen's rule
  \begin{equs} \label{chen}
  X_{su} \star X_{ut}=X_{st}, \qquad s,u,t\in[0,1],
  \end{equs}
  and the analytical condition
  \begin{equs} \label{analytical}
  |\langle X_{st},\tau\rangle|\lesssim|t-s|^{\gamma|\tau|},
  \end{equs}
  for every $ \tau $ which does not contain the decorations zero on the nodes. Otherwise, we have
  \begin{equs}
  \sup_{0 \leq s,t \leq 1} \frac{\langle  X_{st}, \tau \rangle}{|t-s|^{(1-\gamma)|\tau|_0 + \gamma |\tau|}} < \infty,
  \end{equs}
  where $ |\tau|_0 $ counts the number of times the decoration $ 0 $ appears in $ \tau $.
  This extra assumption is needed when one wants to consider the renormalisation in \cite{BCFP}. Nodes with $ 0 $ decorations are distinguished and it is were some renormalisation may have occurred. In the sequel, we will consider the biggest $ N \in \N $ such that  $ \gamma N \leq 1 $. The branched $ \gamma $-rough paths are taking values in $ \CG_{N} $.
 We denote this space by $\BR^\gamma$.
  
  We are supposed given an alphabet $ A $ and we consider the linear span of the words on this alphabet denoted by $ T(A) $. We set $ \varepsilon  $ as the empty word. The product on $ T(A)$ is the shuffle product defined by
  \begin{equs}
  \varepsilon \shuffle v=v\shuffle \varepsilon  =v, \quad (au\shuffle bv) = a(u\shuffle bv) + b(au\shuffle v)
  \end{equs}
for all $u,v\in T(A)$ and $a,b\in A$.
The coproduct $\bar\Delta:T(A)\to T(A)\otimes T(A)$ is the deconcatenation of words: 
\[ \bar\Delta(a_1\dotsm a_n) = a_1\dotsm a_n\otimes \varepsilon  + \varepsilon \otimes a_1\dotsm a_n + \sum_{k=1}^{n-1}a_1\dotsm a_k\otimes a_{k+1}\dotsm a_n.\]
Equipped with this product and coproduct $ T(A) $  is a Hopf algebra. The grading of $ T(A) $
 is given by the length of words $\ell(a_1\dotsm a_n) = n$. We denote by $ \CG_A $ the group of characters associated to $ T(A) $ and by $ * $ the convolution product.
  An anisotropic $\gamma$-rough path, with $\gamma=(\gamma_a, \, a\in A)$, $ 0<\gamma_a<1$, is a map $X:[0,1]^2\to \CG_{A}$ such that $ X_{tt} = \varepsilon^{*} $ where $  \varepsilon^{*}$ is the counit. It satisfies
\begin{equs}
X_{su} * X_{ut}=X_{st}, \qquad |\langle X_{st},v\rangle|\lesssim|t-s|^{\hat{\gamma}\omega(v)}
\end{equs} 
for all $(s,u,t)\in[0,1]^3$ and word $v$. Moreover, one has $\hat{\gamma}=\min_{a\in A}\gamma_a$. For a word $v=a_1\dotsm a_k$ of length $k$ we define
\begin{equation}
  \label{eq:weight}
  \omega(v)=\frac{\gamma_{a_1}+\dotsc+\gamma_{a_k}}{\hat{\gamma}}=\frac{1}{\hat{\gamma}}\sum_{a\in A} n_a(v)\gamma_a
\end{equation}
where $n_a(v)$ is the number of times the letter $ a $ appears in $ v $.
The different weights $ \gamma_a $ correspond to a rough SDEs whose drivers have various regularities. One wants to incorporate them in this analytical bound. 
 We denote by $ \AN^{\gamma} $ the space of anisotropic $\gamma$-rough paths introduced in \cite{TZ}. A similar concept has been considered in \cite{Gyu16} called $ \Pi $-rough path. The idea of such paths has its roots in the foundation paper \cite{Lyons98}. 
Classical geometric rough paths are when the $ \gamma_a $ are all equal to the same $ \gamma $. 
 
 In the sequel, 
 the alphabet $ A $ will be either $ \CT $ or a subset of $\CT$. Then, the weight $ \gamma_{\tau} $ will correspond to the analytical bounds of a branched $\gamma$-rough path: $ \gamma |\tau| $ or $ (1-\gamma)|\tau|_0 +  \gamma |\tau|  $ depending on whether $ \tau $ contains $ 0 $ decorations.
 Then as for the branched rough paths, we will perform a truncation and consider paths taking values in $ \CG_{\CT_N,N}$. Elements of $ \CG_{\CT_N,N}$  are characters over $ T_N(\CT_N) $ which are  words $ v = \tau_1 \otimes \ldots \otimes \tau_n $ built on the alphabet $ \CT_N $ such that $  \sum_{i=1}^n |\tau_i| \leq N $. 
 
 The next theorem first stated in \cite{LV07} and reformulated in \cite{TZ} constructs an anisotropic rough path over a path $ (x^{a})_{a \in A} $:

  \begin{theorem}[Lyons-Victoir extension]
Let $(x^a)_{a \in A}$, with $x^a\in C^{\gamma_a}([0,1])$ such that $ 1 \notin \sum_{a \in A} \gamma_a \N $, then there exists an anisotropic rough path  $\bar X$ over $(x^a)_{a \in A}$: $ x_{t}^{a} - x_{s}^{a} =  \langle \bar X_{st},a \rangle $. 
  \label{thm:Lyons-Victoir}
\end{theorem}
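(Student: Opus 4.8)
The statement is the classical Lyons--Victoir extension theorem, now phrased for the anisotropic scale $(\gamma_a)_{a\in A}$, so the plan is to reproduce that argument with the extra bookkeeping forced by several regularities. I would build the character $\bar X_{st}$ on $T(A)$ word by word, inducting on the length $\ell(v)$ of words; the order in which words of a given length are treated is immaterial, since the Chen and shuffle constraints couple a word only to strictly shorter ones. The base case is dictated: on letters one must put $\langle\bar X_{st},a\rangle=x^a_t-x^a_s$, which trivially satisfies Chen's rule and has the required bound $|t-s|^{\gamma_a}$. For the inductive step, suppose $\bar X$ has been defined on all words of length $<n$ so that it is a character there, satisfies $\bar X_{su}*\bar X_{ut}=\bar X_{st}$ when tested against words of length $<n$, and obeys $|\langle\bar X_{st},v\rangle|\lesssim|t-s|^{\hat\gamma\omega(v)}$.

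For a word $v=a_1\cdots a_n$ of length $n$, Chen's rule together with the deconcatenation coproduct forces the three-point defect
\[
\delta\langle\bar X,v\rangle_{sut}:=\langle\bar X_{st},v\rangle-\langle\bar X_{su},v\rangle-\langle\bar X_{ut},v\rangle
\]
to equal $\sum_{v=v'v'',\,v',v''\neq\varepsilon}\langle\bar X_{su},v'\rangle\,\langle\bar X_{ut},v''\rangle$, while the shuffle identities relating $v$ to shorter words impose a symmetry constraint on $\langle\bar X_{st},v\rangle$; both right-hand sides involve only words of length $<n$, hence are already known, and a direct computation using the inductive hypothesis shows the defect is a $2$-cocycle. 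The task therefore reduces to: given this explicit cocycle of Hölder regularity $\sum_{i=1}^n\gamma_{a_i}=\hat\gamma\omega(v)$, produce a function $[0,1]\to\R$ whose coboundary is the cocycle, which is Hölder of exponent $\hat\gamma\omega(v)$, and which is compatible with the already-fixed shuffle relations.

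This is where the hypothesis $1\notin\sum_{a\in A}\gamma_a\N$ is used: it guarantees $\hat\gamma\omega(v)\neq 1$ for every word, so one is always strictly in one of two regimes. If $\hat\gamma\omega(v)>1$, the sewing lemma applies and yields such a function, unique up to a genuine increment; I would take the sewing solution and absorb any increment needed to restore the shuffle identities, which is possible because the shuffle decomposition of $v$ only sees shorter words. If $\hat\gamma\omega(v)<1$, the coboundary equation is genuinely under-determined, and here I would use the Lyons--Victoir interpolation scheme: prescribe the component on dyadic subintervals of $[0,1]$ from the cocycle via a telescoping/partition-of-unity construction, verify the resulting function has exactly the exponent $\hat\gamma\omega(v)$ and the prescribed coboundary, and make the free part symmetric so that the shuffle constraints are met. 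Iterating over all words of length $n$ and then over $n$ assembles $\bar X$; at the end one reads off $\langle\bar X_{st},a\rangle=x^a_t-x^a_s$ from the construction and checks $\bar X_{tt}=\varepsilon^{*}$.

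The main obstacle is the under-determined regime $\hat\gamma\omega(v)<1$, where one must simultaneously hit the exact Chen cocycle identity, keep $\bar X_{st}$ a character for the shuffle product (so the output really is an anisotropic rough path, not just a multiplicative functional), and obtain the sharp exponent $\hat\gamma\omega(v)$ rather than a worse one. The anisotropic twist makes the induction heavier than in \cite{LV07}: the natural order is by word length, but the estimates are governed by $\omega$, so at each stage one must control all shorter words of every weight at once, and the interpolation has to be performed componentwise while respecting the shuffle algebra. The remaining points — the cocycle property of the defect, stability of the Hölder bounds under the construction, and compatibility with the lower levels already built — are routine once the scheme is in place, and follow the pattern of the original argument and its reformulation in \cite{TZ}.
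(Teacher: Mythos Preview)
The paper does not prove this theorem: it is stated as a known result, attributed to \cite{LV07} with the anisotropic reformulation taken from \cite{TZ}, and is used as a black box throughout. So there is no ``paper's own proof'' to compare against.

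Your sketch is the standard Lyons--Victoir argument and is essentially correct in outline. One small point: in the regime $\hat\gamma\omega(v)>1$ the sewing lemma gives a \emph{unique} two-parameter object with the prescribed coboundary and regularity, so there is no residual ``genuine increment'' to absorb; the shuffle identities hold automatically in that regime by uniqueness, since both sides of any shuffle relation have the same coboundary and the same Hölder exponent. In practice (and this is how \cite{LV07} and \cite{TZ} organise it) one avoids the word-by-word shuffle bookkeeping altogether by working directly in the step-$n$ free nilpotent Lie group: at each level one chooses a basis of the new Lie brackets, extends on that basis via sewing or dyadic interpolation, and then the character property is built in by construction rather than checked afterwards. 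Your scheme is equivalent but carries more overhead.
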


The rough path constructed from the Lyons-Victoir theorem is neither unique nor canonical. At each step of the construction, arbitrary choices are made. An analogue construction exists in regularity structures for the reconstruction theorem where uniqueness is lost for negative exponent (see \cite{reg}). The Lyons-Victoir extension is at the core of the transformation from branched rough paths to anisotropic rough paths described in \cite{TZ}. Before, we need a map to transform trees into words which is the Hairer-Kelly map $ \psi $.
It has been introduced in \cite{HK15} and a reformulation of this map  is given in \cite[Def. 4 Sec. 6]{Br173} by 
\begin{definition}[Hairer--Kelly map]\label{HKmap}
The map $\psi :\mathcal{H} \rightarrow (T(\mathcal{T}), \shuffle)$ is defined as the unique Hopf algebra morphism from $\mathcal{H}$ to the shuffle Hopf algebra $(T(\mathcal{T}), \shuffle)$ obeying
\[
	\psi = (\psi\otimes P_{\one}) \circ \Delta,
\]
where  $P_{\one}:=\mathrm{id}-\one^{*}$ is the augmentation projector.
\end{definition}

\begin{remark} This definition using the Connes-Kreimer coproduct is very useful for performing proofs and reveals also the intrinsic construction of this map via a  recursion.
\end{remark}

The following theorem given in \cite{TZ} is an extension to anisotropic rough paths of the original theorem in \cite{HK15} on geometric rough paths.

\begin{theorem}\label{Hairer-Kelly}
  Let $X$ be a branched $\gamma$-rough path. There exists an anisotropic geometric rough path $\bar{X}$ indexed by words on the alphabet $\CT_N$, $ N = \lfloor 1/\gamma \rfloor $, with exponents $(\gamma_\tau, \tau\in\CT_N)$, and such that $\langle X,\tau\rangle = \langle\bar{X},\psi(\tau)\rangle$.
\end{theorem}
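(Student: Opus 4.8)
The plan is to follow the strategy of \cite{HK15}, adapted to the anisotropic setting of \cite{TZ}: build $\bar X$ weight by weight, feeding suitable increments into the Lyons--Victoir extension theorem (Theorem~\ref{thm:Lyons-Victoir}) and then correcting its single-letter components by translation maps in the sense of \cite{BCFP}, the algebra of the Hairer--Kelly map $\psi$ guaranteeing compatibility with $X$. The cornerstone is a triangularity and grading property of $\psi$: writing $(\tau)$ for the one-letter word attached to a tree $\tau$, the recursion $\psi=(\psi\otimes P_{\one})\circ\Delta$ of Definition~\ref{HKmap} gives, by induction on $|\tau|$,
\begin{equs} \label{eq:HKtriang}
\psi(\tau)=(\tau)+R_\tau,\qquad R_\tau\in\mathrm{span}\bigl\{(\sigma_1,\dots,\sigma_k)\,:\,k\ge 2,\ \textstyle\sum_{j}|\sigma_j|=|\tau|\bigr\},
\end{equs}
because $\Delta'\tau:=\Delta\tau-\one\otimes\tau-\tau\otimes\one$ only involves forests of size $<|\tau|$ while $P_{\one}$ annihilates the unit. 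Hence $\psi$ is graded for the weight $\omega$, and the correction $R_\tau$ involves only letters of size strictly smaller than $|\tau|$; this is what makes the recursion below well founded.

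First I would apply Theorem~\ref{thm:Lyons-Victoir} to the level-one data consisting of $t\mapsto\langle X_{0t},\bullet_i\rangle$ on the single-node letters $\bullet_i$ and of the zero path on every tree of size $\ge 2$, obtaining an anisotropic geometric rough path $\bar X^{(1)}$ with the prescribed exponents; Chen's rule \eqref{chen} applied to the primitive elements $\bullet_i$ gives $\langle\bar X^{(1)}_{st},\psi(\bullet_i)\rangle=\langle\bar X^{(1)}_{st},(\bullet_i)\rangle=\langle X_{st},\bullet_i\rangle$, and $\langle\bar X^{(1)}_{st},(\tau)\rangle=0$ for $|\tau|\ge 2$. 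Then I would induct on $n$, assuming an anisotropic geometric rough path $\bar X^{(n-1)}$, obtained from $\bar X^{(1)}$ by translations in letter-directions of weight $\le n-1$, such that $\langle\bar X^{(n-1)}_{st},\psi(\sigma)\rangle=\langle X_{st},\sigma\rangle$ for all $|\sigma|\le n-1$, and $\langle\bar X^{(n-1)}_{st},(\tau)\rangle=0$ for $|\tau|=n$ (this persists since a translation in a weight-$<n$ direction cannot change the value on the weight-$n$ letter $(\tau)$). For each tree $\tau$ with $|\tau|=n$ I would set $z^\tau_{st}:=\langle X_{st},\tau\rangle-\langle\bar X^{(n-1)}_{st},R_\tau\rangle$; by \eqref{eq:HKtriang} this uses only values of $\bar X^{(n-1)}$ on weight-$n$ words of length $\ge 2$, hence only data fixed at earlier stages, so the recursion is acyclic.

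The remaining work is to check that $z^\tau$ is the increment of an admissible $C^{\gamma_\tau}$-path. The bound $|z^\tau_{st}|\lesssim|t-s|^{\gamma_\tau}$ follows from the analytic condition \eqref{analytical} on $X$ (or its $0$-decoration variant) and the anisotropic bound that $\bar X^{(n-1)}$ already satisfies on the weight-$n$ element $R_\tau$. Additivity $z^\tau_{st}=z^\tau_{su}+z^\tau_{ut}$ is where the bialgebra structure is used: Chen's rule for $X$ gives the defect $\langle X_{st},\tau\rangle-\langle X_{su},\tau\rangle-\langle X_{ut},\tau\rangle=\langle X_{su}\otimes X_{ut},\Delta'\tau\rangle$, while Chen's rule for $\bar X^{(n-1)}$ together with the coalgebra-morphism identity $\bar\Delta\circ\psi=(\psi\otimes\psi)\circ\Delta$ gives
\begin{equs}
\langle\bar X^{(n-1)}_{st},\psi(\tau)\rangle-\langle\bar X^{(n-1)}_{su},\psi(\tau)\rangle-\langle\bar X^{(n-1)}_{ut},\psi(\tau)\rangle=\bigl\langle(\bar X^{(n-1)}_{su}\circ\psi)\otimes(\bar X^{(n-1)}_{ut}\circ\psi),\Delta'\tau\bigr\rangle .
\end{equs}
Since the forests in $\Delta'\tau$ have size $<n$ and $X_{su},X_{ut},\bar X^{(n-1)}_{su}\circ\psi,\bar X^{(n-1)}_{ut}\circ\psi$ are characters on $\CH$, the inductive hypothesis identifies this right-hand side with $\langle X_{su}\otimes X_{ut},\Delta'\tau\rangle$; together with the additivity of $\langle\bar X^{(n-1)}_{st},(\tau)\rangle$ (a single letter), subtracting $R_\tau=\psi(\tau)-(\tau)$ shows that the Chen defect of $z^\tau$ vanishes. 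Finally I would form $\bar X^{(n)}$ by translating $\bar X^{(n-1)}$ in the directions $(z^\tau)_{|\tau|=n}$: by \cite{BCFP} this is again an anisotropic geometric rough path, it leaves every weight-$<n$ word and every $R_\tau$ unchanged, and it moves $\langle\bar X^{(n-1)}_{st},(\tau)\rangle=0$ to $z^\tau_{st}$, so $\langle\bar X^{(n)}_{st},\psi(\tau)\rangle=z^\tau_{st}+\langle\bar X^{(n-1)}_{st},R_\tau\rangle=\langle X_{st},\tau\rangle$; the inductive hypothesis is restored at level $n$, and $\bar X:=\bar X^{(N)}$ with $N=\lfloor 1/\gamma\rfloor$ is the rough path claimed.

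The step I expect to be most delicate is the bookkeeping that makes the recursion genuinely acyclic: one must organise the induction so that the input to $\langle\bar X^{(n-1)}_{st},R_\tau\rangle$ is precisely $\bar X^{(n-1)}$ together with the weight-$n$ non-generator (bracket) choices already made inside the Lyons--Victoir construction, which rests entirely on the grading in \eqref{eq:HKtriang}; hand in hand with this goes the identification of the two Chen defects above, which needs the precise shape of the Butcher--Connes--Kreimer coproduct and the morphism property of $\psi$ to mesh. The remaining ingredients --- the Hölder estimates, the non-resonance hypothesis $1\notin\sum_{\tau}\gamma_\tau\N$ required to invoke Theorem~\ref{thm:Lyons-Victoir}, and the treatment of $0$-decorated nodes with the modified exponent $(1-\gamma)|\tau|_0+\gamma|\tau|$ --- are routine.
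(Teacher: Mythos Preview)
The paper does not supply its own proof of this statement: Theorem~\ref{Hairer-Kelly} is quoted from \cite{TZ} (extending \cite{HK15}) and used as a black box, the only hint of the construction appearing later in the proof of Proposition~\ref{ext}, where the iterative scheme from \cite[Theorem 5.6]{TZ} is invoked. Your sketch is essentially the argument of those references --- the triangular decomposition $\psi(\tau)=(\tau)+R_\tau$ with $R_\tau$ a combination of words in strictly smaller letters, the definition $z^\tau_{st}=\langle X_{st},\tau\rangle-\langle\bar X^{(n-1)}_{st},R_\tau\rangle$, the additivity check by matching the two Chen defects through the coalgebra morphism identity $\bar\Delta\psi=(\psi\otimes\psi)\Delta$, and the extension to the next level --- with only a cosmetic difference in the bookkeeping: \cite{TZ} re-invokes the Lyons--Victoir extension at every stage on the enlarged alphabet $\CT_k$, whereas you call Lyons--Victoir once at the start (with zero paths on the higher letters) and then adjust the single-letter components by translations in the sense of \cite{BCFP}. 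Both organisations are valid and the analytic and algebraic verifications are identical.
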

Using the previous theorem, the authors in \cite{TZ} define a new renormalisation by first noticing that the value of $\langle X,\tau\rangle$ can be modified by adding
the increment of a suitable $ \CC^{\alpha} $ function for a well-chosen $ \alpha $. They consider the following abelian group (under pointwise addition)
\[
  \cal C^\gamma:=\{(g^\tau)_{\tau\in \CT_N}: \, g^\tau_0=0, \, g^\tau\in C^{\gamma_\tau}([0,1]), \, \forall\, \tau\in\CT, |\tau|\leq N\}.
\]
Equipped with this family of maps, they are able to state one of their main results:

\begin{theorem}  \label{thm:Lyons-Victoirb}
  There exists a transitive free action of $\CC^\gamma$ on branched $ \gamma $-rough paths, a map $(g,X)\mapsto gX$ such that
  \begin{enumerate}
\item  For each $g,g'\in\CC^\gamma$ and $X\in\BR^\gamma$ the identity $g'(gX) = (g+g')X$ holds.
\item For every pair $X,X'\in\BR^\gamma$ there exists a unique $g\in\CC^\gamma$ such that $gX=X'$.
\end{enumerate}
\end{theorem}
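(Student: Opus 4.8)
The plan is to realise the action by re-selecting, grading by grading, the non-canonical data that enters the Lyons--Victoir extension, with $\CC^\gamma$ parametrising exactly that freedom (this is the observation, recorded just before Theorem~\ref{thm:Lyons-Victoirb}, that $\langle X,\tau\rangle$ can be altered by the increment of a $C^{\gamma_\tau}$ function). Write $\Delta'\tau:=\Delta\tau-\tau\otimes\one-\one\otimes\tau$ for the reduced coproduct; by \eqref{Connes-Kreimer} every term $\tau'\otimes\tau''$ of $\Delta'\tau$ has $|\tau'|,|\tau''|<|\tau|$, $|\tau'|+|\tau''|=|\tau|$ and the same decorations, whence $\gamma_{\tau'}+\gamma_{\tau''}=\gamma_\tau$. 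For a two--parameter map $A$ put $(\delta A)_{sut}:=A_{st}-A_{su}-A_{ut}$. Read as an extension statement, Theorem~\ref{thm:Lyons-Victoir} says that for a map $c_{sut}$ obeying the Chen--type compatibility ($\delta c=0$ in the appropriate sense) and $|c_{sut}|\lesssim|t-s|^{\gamma_\tau}$ there is $A$ with $\delta A=c$ and $|A_{st}|\lesssim|t-s|^{\gamma_\tau}$, unique modulo the increment of a $C^{\gamma_\tau}([0,1])$ function. I would fix, once and for all, a \emph{linear} section $c\mapsto\mathrm{LV}(c)$ of this extension with $\mathrm{LV}(0)=0$ (the classical dyadic construction is linear in $c$).

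\textbf{The action and its well-posedness.} Given $X\in\BR^\gamma$ and $g=(g^\tau)\in\CC^\gamma$, define $gX$ on trees by induction on $|\tau|$ and extend it multiplicatively to $\CF$. On single nodes, $\langle (gX)_{st},\bullet_i\rangle:=\langle X_{st},\bullet_i\rangle+g^{\bullet_i}_t-g^{\bullet_i}_s$; for $|\tau|\geq 2$, writing $Y$ for $gX$ already constructed below level $|\tau|$,
\begin{equs}\label{eq:gaction}
\langle (gX)_{st},\tau\rangle:=\langle X_{st},\tau\rangle+\mathrm{LV}\!\big(\langle Y_{\cdot\cdot}\otimes Y_{\cdot\cdot},\Delta'\tau\rangle-\langle X_{\cdot\cdot}\otimes X_{\cdot\cdot},\Delta'\tau\rangle\big)_{st}+g^\tau_t-g^\tau_s.
\end{equs}
The argument of $\mathrm{LV}$ involves only levels $<|\tau|$; it is a cocycle because $\delta\langle Z_{\cdot\cdot}\otimes Z_{\cdot\cdot},\Delta'\tau\rangle=0$ for any multiplicative $Z$ satisfying Chen's rule below level $|\tau|$ (coassociativity of $\Delta$), and it is $\lesssim|t-s|^{\gamma_\tau}$ since each summand is $\langle\cdot,\tau'\rangle\langle\cdot,\tau''\rangle$ with $\gamma_{\tau'}+\gamma_{\tau''}=\gamma_\tau$; so \eqref{eq:gaction} is meaningful. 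Chen's rule \eqref{chen} for $gX$ then holds on trees by construction ($\delta$ annihilates the $g^\tau$--increments and $\delta\circ\mathrm{LV}=\mathrm{id}$), hence on all of $\CH$ because $\Delta$ is an algebra morphism and each $(gX)_{st}$ is a character; $(gX)_{tt}=\one^{*}$ is immediate. The bound \eqref{analytical} follows since $\langle (gX)_{st},\tau\rangle$ is a sum of the same quantity for $X$, an $\mathrm{LV}$--term of order $|t-s|^{\gamma_\tau}$, and the increment of $g^\tau\in C^{\gamma_\tau}$. Thus $gX\in\BR^\gamma$, and $\mathrm{LV}(0)=0$ gives $0\cdot X=X$.

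\textbf{Associativity (property 1).} To show $\langle (g'(gX))_{st},\tau\rangle=\langle ((g+g')X)_{st},\tau\rangle$ I would induct on $|\tau|$; for $|\tau|=1$ the increments simply add. For $|\tau|\geq 2$, by the inductive hypothesis $g'(gX)$ and $(g+g')X$ agree below level $|\tau|$, so the cocycle $c_2$ fed to $\mathrm{LV}$ in the formula for $\langle (g'(gX))_{st},\tau\rangle$ (built over $gX$) and the cocycle $c_1$ fed to $\mathrm{LV}$ in the formula for $\langle (gX)_{st},\tau\rangle$ satisfy $c_1+c_2=\langle((g+g')X)_{\cdot\cdot}\otimes((g+g')X)_{\cdot\cdot},\Delta'\tau\rangle-\langle X_{\cdot\cdot}\otimes X_{\cdot\cdot},\Delta'\tau\rangle$, which is precisely the cocycle in the formula for $\langle ((g+g')X)_{st},\tau\rangle$. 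Using linearity of $\mathrm{LV}$ and unwinding \eqref{eq:gaction} twice,
\begin{equs}
\langle (g'(gX))_{st},\tau\rangle=\langle X_{st},\tau\rangle+\mathrm{LV}(c_1)_{st}+\mathrm{LV}(c_2)_{st}+g^\tau_t+{g'}^\tau_t-g^\tau_s-{g'}^\tau_s=\langle ((g+g')X)_{st},\tau\rangle.
\end{equs}
With $0\cdot X=X$ this makes $(g,X)\mapsto gX$ a genuine action of $\CC^\gamma$.

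\textbf{Freeness and transitivity (property 2); the main difficulty.} Given $X,X'\in\BR^\gamma$ I would construct $g$ by induction on $|\tau|$: on single nodes $g^{\bullet_i}_t:=\langle X'_{0t},\bullet_i\rangle-\langle X_{0t},\bullet_i\rangle$, which lies in $C^{\gamma_{\bullet_i}}([0,1])$ (by \eqref{analytical} and Chen's rule) and vanishes at $0$. For $|\tau|\geq 2$, assuming $g^\sigma$ chosen for $|\sigma|<|\tau|$ so that $gX$ and $X'$ already agree below level $|\tau|$, the quantity
\begin{equs}
h_{st}:=\langle X'_{st},\tau\rangle-\langle X_{st},\tau\rangle-\mathrm{LV}\!\big(\langle X'_{\cdot\cdot}\otimes X'_{\cdot\cdot},\Delta'\tau\rangle-\langle X_{\cdot\cdot}\otimes X_{\cdot\cdot},\Delta'\tau\rangle\big)_{st}
\end{equs}
has $\delta h=0$ (Chen's rule for $X$ and $X'$, and $\delta\circ\mathrm{LV}=\mathrm{id}$) and $|h_{st}|\lesssim|t-s|^{\gamma_\tau}$, hence $h_{st}=g^\tau_t-g^\tau_s$ for a unique $g^\tau\in C^{\gamma_\tau}([0,1])$ with $g^\tau_0=0$; comparing with \eqref{eq:gaction} this choice is equivalent to $\langle (gX)_{st},\tau\rangle=\langle X'_{st},\tau\rangle$ and is the only admissible one, which yields both existence and uniqueness of $g$ (freeness being the case $X'=X$, forcing $g=0$). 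The one genuinely delicate point is the first: the action is not canonical, so one must commit to a section of the Lyons--Victoir extension and, for property 1, to a \emph{linear} one; granting that, everything reduces to induction on the number of nodes, powered by the grading-lowering of the Connes--Kreimer coproduct, additivity of the exponents $\gamma_\tau$, and coassociativity. The construction could equivalently be phrased through the Hairer--Kelly map $\psi$ (translate the driving paths $\bar x^\tau\mapsto\bar x^\tau+g^\tau$ of the $\bar X$ of Theorem~\ref{Hairer-Kelly} and re-extend with the same section), which exposes the link with the anisotropic side but needs the identical choice.
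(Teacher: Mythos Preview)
Your argument is essentially correct, but it does not follow the route the paper sketches (which is the construction of \cite{TZ}). In the paper, the action is \emph{defined} on the anisotropic side: one first sends $X$ to the anisotropic rough path $\bar X$ via Theorem~\ref{Hairer-Kelly}, translates the level–one data $x^\tau\mapsto x^\tau+g^\tau$, rebuilds an anisotropic rough path $g\bar X$ over these translated paths by Theorem~\ref{thm:Lyons-Victoir}, and then sets $\langle gX_{st},\tau\rangle:=\langle g\bar X_{st},\psi(\tau)\rangle$. The additivity in property~1 is then read off on the letters of $T(\CT_N)$ (where the translation is literally $x^\tau\mapsto x^\tau+g^\tau$), and property~2 is obtained in the same way. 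You instead stay on the branched side throughout, solving the Chen cocycle equation level by level with a fixed linear section $\mathrm{LV}$ of the extension step; associativity then reduces to $\mathrm{LV}(c_1)+\mathrm{LV}(c_2)=\mathrm{LV}(c_1+c_2)$.

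Both approaches hinge on the same non-canonical choice (a section of the Lyons--Victoir extension), so the substance is similar; the difference is packaging. Your route is more self-contained — it never leaves $\CH$ and makes the role of coassociativity explicit — while the paper's route is tailored to the theme of this article, namely the passage branched $\to$ anisotropic through $\psi$, and makes the identity $\langle gX,\tau\rangle=\langle g\bar X,\psi(\tau)\rangle$ the \emph{definition} rather than a by-product. Note also that your action and the paper's action need not coincide as maps $(g,X)\mapsto gX$ (they depend on different sections, on different Hopf algebras), even though both satisfy the theorem; so your closing remark that the $\psi$–phrasing is ``equivalent'' should be read as ``gives another action with the same properties'', not as producing the same $gX$. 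The one point you should make fully explicit is the existence of the \emph{linear} bounded section $\mathrm{LV}$ with $|\mathrm{LV}(c)_{st}|\lesssim|t-s|^{\gamma_\tau}$ for $\gamma_\tau\le 1$: this is exactly the inductive step of the Lyons--Victoir proof (the dyadic midpoint construction), and its linearity in $c$ is what drives your associativity argument.
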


This action is constructed using the Hairer-Kelly map $\psi$ via the Theorem~\ref{Hairer-Kelly} which relies on the Lyons-Victoir extension theorem. Indeed, $ gX $ is defined for every tree $ \tau $ by 
\begin{equs}
\langle gX_{st},\tau\rangle :=\langle g\bar{X}_{st},\psi(\tau)\rangle,
\end{equs}
where	$ g\bar{X} $ is the anisotropic geometric rough path given by Theorem~\ref{thm:Lyons-Victoir} over the path $ g^{\tau} + x^{\tau} $,  $ x^{\tau}_t -  x^{\tau}_s =  \langle  \bar X_{st},\tau\rangle  $.  The initialisation of the action is performed by sending $ X $ to $  \bar X$. Then, by acting with an element in $ \CC^{\gamma} $, we reconstruct the entire path via the Lyons-Victoir extension theorem. 
So each time, we act with a map $ g $, we reconstruct the path. We recall how the additive structure can be obtained.
Let $ g, \bar g \in \CC^{\gamma} $,  $ \bar g(gX) $ is the anisotropic rough path over $ \bar g^{\tau} + ( g x)^{\tau} $ where  $ ( g x)^{\tau}_t -  ( g x)^{\tau}_s =  \langle   g X_{st},\tau\rangle  $. One has 
\begin{equs}
 \langle \bar g(g X)_{st}, \tau \rangle  =  \bar g^{\tau}_{t} - \bar g^{\tau}_{s} +  \langle  (g X)_{st},\tau\rangle 
\end{equs}
 We apply the definition again to $ gX $ and we get that
\begin{equs}
 \langle g X_{st}, \tau \rangle  =  g^{\tau}_t - g^{\tau}_s + \langle  \bar  X_{st},\tau\rangle  
 \end{equs}
  which gives the additive structure of Theorem~\ref{thm:Lyons-Victoirb}.  
This construction seems to capture any renormalisation on the space of branched rough paths. Therefore, it is a natural question to see what are the $ g $ for various known renormalisations.

An alternative approach given in \cite{BC2019}  constructs a bijection between the two spaces 
$ \BR^{\gamma} $ and $ \AN^{\gamma} $. The main idea is to use an algebraic result from \cite{Foi02,cha10}: 
There exists a subspace $ \CB = \langle\tau_1, \tau_2,...\rangle $ of $ \CT $ such that $ \CH $ is isomorphic as a Hopf algebra to the tensor Hopf algebra $ T(\CB) $. Therefore, $ \CH_{N} $
is isormophic to some $ T_N(\CB_{N}) $, $ \CB_N  $ being a subspace of $ \CT_N $. 
This means that every $ \tau \in \CH_N $ has a unique representation of the form:
\begin{equs} \label{uniquedec}
\tau = \sum_{R} \lambda_R \tau_{r_1} \star \ldots \star \tau_{r_n}
\end{equs}
where the sum is performed over all the multi-indexes $ R = (r_1,\ldots,r_n) $ 
for which $ \sum_{i} |\tau_{r_i}| \leq N $.
Then, one can exhibit an isomorphism $ \Psi $ between the two spaces  $ \CH_{N} $ and $ T_N(\CB_{N}) $ based on the basis $ \CB_N $ (see \cite[Lemma 4.2]{BC2019}):
\begin{equs}
\Psi : \tau_1 \star \ldots \star \tau_r \mapsto
\tau_1 \otimes \ldots \otimes \tau_r
\end{equs}
where $ \tau_1 \otimes \ldots \otimes \tau_n \in T_N(\CB_{N}) $ . Then, the authors exploit this isomorphism to give their main result which is an isomorphism between non-geometric and geometric rough paths
\begin{theorem} \label{chiso} Let $ X \in \BR^{\gamma} $, then $ \tilde{X} := \Psi(X) \in \AN^{\gamma}$.
\end{theorem}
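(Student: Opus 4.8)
The plan is to obtain $\tilde X$ as the image of $X$ under the Chapoton--Foissy isomorphism and then to check, in turn, the three requirements for membership in $\AN^\gamma$: the character (geometricity) property, Chen's rule, and the anisotropic analytic bound. Write $\Psi\colon\CH_N\to T_N(\CB_N)$ for the graded Hopf algebra isomorphism of \cite{Foi02,cha10} used in \cite{BC2019}, the target being equipped with the shuffle product and the deconcatenation coproduct. By $\Psi(X)$ I mean the functional defined on words by
\begin{equs}
\langle\tilde X_{st},v\rangle:=\langle X_{st},\Psi^{-1}(v)\rangle,\qquad v\in T_N(\CB_N),
\end{equs}
so that $\langle\tilde X,\Psi(\tau)\rangle=\langle X,\tau\rangle$ for every tree $\tau$, in parallel with Theorem~\ref{Hairer-Kelly}.

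The algebraic part is formal. As $X_{st}$ is an algebra morphism and $\Psi^{-1}$ an algebra isomorphism, $\tilde X_{st}$ is multiplicative for $\shuffle$, hence a character of $T_N(\CB_N)$; and $\tilde X_{tt}=\one^{*}\circ\Psi^{-1}=\varepsilon^{*}$ since a Hopf isomorphism intertwines counits. For Chen's rule I would use that $\Psi^{-1}$ is simultaneously a coalgebra morphism, $\Delta\circ\Psi^{-1}=(\Psi^{-1}\otimes\Psi^{-1})\circ\bar\Delta$; dualising this and inserting Chen's rule \eqref{chen} for $X$ gives $\tilde X_{su}\ast\tilde X_{ut}=\tilde X_{st}$ directly.

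The substantive step — and the one I expect to be the main obstacle — is the analytic estimate $|\langle\tilde X_{st},v\rangle|\lesssim|t-s|^{\hat\gamma\,\omega(v)}$. The strategy rests on two facts. First, the Chapoton--Foissy isomorphism is graded by the number of nodes, and in the present decorated setting should moreover respect the finer bigrading by the pair (number of nodes, number of $0$-decorations); consequently, for a word $v=\tau_1\otimes\dotsm\otimes\tau_k$ on $\CB_N$ the element $\Psi^{-1}(v)$ is a linear combination of forests each having exactly $|v|:=\sum_i|\tau_i|$ nodes and $|v|_0:=\sum_i|\tau_i|_0$ zero-decorations. Second, multiplicativity of $X_{st}$ turns the tree bound \eqref{analytical} and its $0$-decorated variant into the uniform forest estimate $|\langle X_{st},\sigma\rangle|\lesssim|t-s|^{(1-\gamma)|\sigma|_0+\gamma|\sigma|}$. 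Combining these, and recalling that here $\hat\gamma=\gamma$ and $\hat\gamma\,\omega(v)=(1-\gamma)|v|_0+\gamma|v|$, one reads off the desired bound; joint continuity of $(s,t)\mapsto\tilde X_{st}$ is inherited from $X$. The delicate issue is therefore a bookkeeping one with \cite{Foi02,cha10}: one must ensure that the isomorphism fixed in \cite{BC2019} is genuinely graded --- indeed bigraded by $(|\cdot|,|\cdot|_0)$ --- so that $\Psi^{-1}$ neither increases the node count nor creates spurious $0$-decorations, for otherwise the transported estimate would deteriorate. In contrast to the Hairer--Kelly route, no use of the Lyons--Victoir extension (Theorem~\ref{thm:Lyons-Victoir}) enters here, which is exactly what makes this construction canonical.
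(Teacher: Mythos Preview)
The paper does not supply its own proof of Theorem~\ref{chiso}; the statement is quoted from \cite{BC2019} as that paper's main result, and the present work only uses it as input. So there is no in-paper argument to compare your sketch against. Your outline is the natural one and matches the strategy of \cite{BC2019}: the algebraic clauses (character property, Chen's rule) are formal consequences of $\Psi$ being a Hopf algebra isomorphism, and the analytic bound comes from the grading. Your flagged concern about the bigrading by $(|\cdot|,|\cdot|_0)$ is the right technical point; the Chapoton--Foissy construction for decorated trees respects the multigrading by decoration multiplicities, so the transported bound does not deteriorate, but this is exactly what has to be checked and the present paper says nothing about it beyond the remark that the basis $\CB_N$ is ordered by $|\tau_i|$.
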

This result gives a canonical way to move from one representation to the other and does not depend on the choice of the basis $ \CB_N $ if one assumes that $ |\tau_i| \leq |\tau_j| $ for $ i \leq j $. It also avoids the use of the Lyons-Victoir theorem which reconstructs entirely the path on a different states space in a non-canonical way. One will also see in the sequel that the renormalisation behaves nicely toward this isomorphism whereas it is unclear how it can commute with the Hairer-Kelly approach.

\section{Interaction of the renormalisation with the Hairer-Kelly map }
\label{sec::3}

In this section, we consider two renormalisations on branched rough paths and see how they commute with the Hairer-Kelly map. They are both part of the  same family of maps that we will first introduce.
 We want to act on a branched rough paths with linear maps $ M : \CH \rightarrow \CH $ mutiplicative for the forest product. Given $ X \in \BR^{\gamma} $ and $ \tau \in \CT $, we set
\begin{equs}
\langle \hat X_{st},\tau\rangle := \langle  X_{st}, M \tau\rangle  =  \langle M^{*} X_{st}, \tau\rangle 
\end{equs}
where $ M^{*} $ is the adjoint of the map $ M $. Now, it remains to assume sufficiently nice properties on $ M $ in order to get the Chen's relation \eqref{chen} and the good analytical bounds \eqref{analytical}. One has by definition
\begin{equs}
 (\hat X_{su}\star \hat X_{ut}) = \left( X_{su} \circ M  \otimes   X_{ut} \circ M \right) \Delta
\end{equs}
If one assumes the cointeraction property 
\begin{equs} \label{cointeraction}
\left( M \otimes M \right) \Delta  = \Delta M
\end{equs}
Then
\begin{equs}
(\hat X_{su} \star \hat X_{ut}) & \left( X_{su} \circ M  \otimes   X_{ut} \circ M  \right) \Delta =  \left( X_{su}   \otimes   X_{ut}   \right) \Delta M  
\\ & = ( X_{su} \star  X_{ut})\circ M  =  X_{st} \circ M  =\hat X_{st}
\end{equs}
where we have applied the Chen's identity on $ X $. For the analytical bounds \eqref{analytical}, we have to assume that $ M $ sends a tree $ \tau $ to more regular terms. 
We therefore suppose that for every $ \tau \in \CT$, one has:
\begin{equs} \label{analytical conditions}
M \tau = \sum_i \lambda_i \tau_i, \quad \lambda_i \in \R, \, \tau_i \in \CT, \, \gamma_{\tau_i} \geq  \gamma_{\tau}, \, |\tau_i| \leq |\tau|.
\end{equs}
The last condition $ |\tau_i| \leq |\tau| $ guarantees that $ M $ respects the projection onto $ \CH_N $.

Given a linear map $ M $ satisfying the properties stressed before, we want to find a linear map $ \bar M : T(\CT) \rightarrow T(\CT) $ such that the following diagram commutes:

\begin{equ} \label{cdiag}
\begin{tikzcd}
\mathcal{H}  \arrow[r, "\psi"] \arrow[d, "M"]
& T(\mathcal{T}) \arrow[d, "\bar M"] \\
\mathcal{H}  \arrow[r,  "\psi"]
& T(\mathcal{T})
\end{tikzcd}
\end{equ}
We want also this map to satisfy: $ M \one = \one $ which together with the analytical bounds imply that $ M \one^{*} = \one^{*} M $.
 Moreover, we want this map to act on
the space of anisotropic rough paths $\AN^{\gamma}$ defined on $ T(\CT) $.
\begin{proposition} \label{barM} The map $ \bar M $ which makes the diagram~\ref{cdiag} commute is given for $ u, v \in T(\CT) $ and $ \tau \in \CT $ by:
\begin{equs} \label{defbarM}
\bar M (u  v) = (\bar M u)(  \bar M v),\quad \bar M  \tau = M \tau. 
\end{equs}
Moreover, $ \bar M $ acts on $ \AN^{\gamma} $, for every $ X \in  \AN^{\gamma}  $, $ X \circ \bar M \in  \AN^{\gamma}  $.
\end{proposition}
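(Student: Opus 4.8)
The plan is to take for $\bar M$ the ``letterwise'' extension of $M$ to $T(\CT)$, prove the commutation $\psi M = \bar M\psi$ by induction on the number of nodes, and then deduce the invariance of $\AN^\gamma$ from the Hopf-algebraic properties of $\bar M$ together with \eqref{analytical conditions}. By \eqref{analytical conditions}, $M$ maps each tree to a finite linear combination of single trees, hence restricts to an endomorphism of $\langle\CT\rangle$; accordingly I would first read \eqref{defbarM} as $\bar M(\tau_1\dotsm\tau_n) := (M\tau_1)\dotsm(M\tau_n)$, each factor $M\tau_i$ expanded as a sum of trees and the product interpreted in $T(\CT)$ by concatenation. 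Since interleaving and (de)concatenating letters commute with applying $M$ to each letter, $\bar M$ commutes with the shuffle product, with the deconcatenation coproduct $\bar\Delta$, and with the unit $\varepsilon$ and counit $\varepsilon^{*}$; in particular it is an endomorphism of the shuffle Hopf algebra $(T(\CT),\shuffle)$, and by the size condition in \eqref{analytical conditions} it preserves the $N$-truncation $T_N(\CT_N)$ on which $\AN^\gamma$ lives (each letter $\tau$ being replaced by trees of no larger size).

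For the commuting diagram I would prove $\psi M = \bar M\psi$ on a single tree $\tau$ by induction on $|\tau|$ and then extend to $\CH$: both $\psi M$ and $\bar M\psi$ are multiplicative from the forest product of $\CH$ to $\shuffle$ (for $\psi$ by Definition~\ref{HKmap}, for $M$ by assumption, for $\bar M$ by the previous paragraph), so the tree case suffices. The case $\tau = \one$ is trivial. For $|\tau|\geq1$, the recursion $\psi = (\psi\otimes P_{\one})\Delta$ and the cointeraction \eqref{cointeraction} give $\psi M\tau = (\psi\otimes P_{\one})\Delta M\tau = (\psi M\otimes P_{\one}M)\Delta\tau$. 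Two elementary facts then close the induction: first, $P_{\one}M = M P_{\one}$ on $\CH$, which is exactly where $M\one = \one$ and \eqref{analytical conditions} enter — $M$ has no constant term on the augmentation ideal, so the two sides agree both on $\one$ and on every nonempty forest; second, among the summands of $\Delta\tau = \sum_{(\tau)}\tau^{(1)}\otimes\tau^{(2)}$ the only one with $\tau^{(2)} = \one$ is $\tau\otimes\one$, which $\id\otimes P_{\one}$ annihilates, while every other summand has $|\tau^{(1)}| < |\tau|$, so the induction hypothesis $\psi M\tau^{(1)} = \bar M\psi\tau^{(1)}$ applies. Expanding $\bar M\psi\tau = \bar M\bigl((\psi\otimes P_{\one})\Delta\tau\bigr) = (\bar M\psi\otimes M P_{\one})\Delta\tau$ — using that $\bar M$ is multiplicative for concatenation with $\bar M$ equal to $M$ on $\CT$ — and comparing term by term with the expression for $\psi M\tau$, the two coincide, so diagram~\ref{cdiag} commutes.

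For the action on $\AN^\gamma$, fix $X\in\AN^\gamma$ and put $Y_{st} := X_{st}\circ\bar M$. Since $\bar M$ is an endomorphism of the shuffle Hopf algebra, each $Y_{st}$ is again a character and $Y_{tt} = \varepsilon^{*}\circ\bar M = \varepsilon^{*}$; and because $(\bar M\otimes\bar M)\bar\Delta = \bar\Delta\bar M$, Chen's rule for $X$ transfers, $Y_{su}*Y_{ut} = (X_{su}*X_{ut})\circ\bar M = X_{st}\circ\bar M = Y_{st}$. For the analytic estimate, let $v = \tau_1\dotsm\tau_k$ be a word over $\CT_N$; by \eqref{analytical conditions}, $\bar M v$ is a finite linear combination of words $\sigma_1\dotsm\sigma_k$ over $\CT_N$ with $\gamma_{\sigma_i}\geq\gamma_{\tau_i}$, hence with $\omega(\sigma_1\dotsm\sigma_k)\geq\omega(v)$; since $|t-s|\leq1$ and only finitely many terms occur, $|\langle Y_{st},v\rangle| = |\langle X_{st},\bar M v\rangle|\lesssim|t-s|^{\hat\gamma\,\omega(v)}$, so $Y\in\AN^\gamma$.

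The only step that needs genuine care is the induction for $\psi M = \bar M\psi$, and within it the identity $P_{\one}M = M P_{\one}$ — the precise place where the hypotheses $M\one = \one$ and \eqref{analytical conditions} are used — together with the remark that the unique degree-$|\tau|$ summand $\tau\otimes\one$ is the only obstruction to the induction and is killed by $P_{\one}$. Once $\bar M$ is recognised as the letterwise extension of $M$, its compatibility with $\shuffle$, $\bar\Delta$ and the (co)unit is automatic, and both the algebraic axioms and the analytic bound for $Y = X\circ\bar M$ follow at once.
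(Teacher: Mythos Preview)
Your proof is correct and follows essentially the same approach as the paper: induction on $|\tau|$ via the recursion $\psi=(\psi\otimes P_{\one})\Delta$ combined with the cointeraction \eqref{cointeraction}, the commutation $P_{\one}M=MP_{\one}$, and the observation that $P_{\one}$ on the right kills the unique term $\tau\otimes\one$ so that only strictly smaller left factors survive; the invariance of $\AN^\gamma$ is likewise deduced from $\bar M$ being a shuffle Hopf algebra endomorphism plus the weight inequality coming from \eqref{analytical conditions}. Your write-up is in fact somewhat more explicit than the paper's in justifying the multiplicative extension from trees to forests and the $N$-truncation.
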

\begin{proof} We proceed by induction on the size of the trees. Let $ \tau \in \mathcal{T} $, then  one has from the cointeraction property \eqref{cointeraction}
\begin{align*}
\psi(M \tau) & = (\psi\otimes P_{\one})  \Delta M \tau  =  (\psi M \otimes P_{\one} M )  \Delta \tau
\end{align*}
Then we apply the induction hypothesis to get $ \psi M = \bar M \psi $. Indeed, the projection $ P_{\one} $ guarantees that $ \psi M $ is applied on terms smaller than $ \tau $. Then we use the fact that   $ M $ and $ \bar M $ coincide on trees to get
\begin{align*}
\psi(M \tau)
 = (\bar M \psi \otimes \bar M P_{\one})  \Delta \tau  = \bar M (\psi \otimes P_{\one})  \Delta \tau = \bar M \psi(\tau)
\end{align*}
The commutation between $ M $ and $P_{\one} $ is guaranteed by the fact that $ M $ commutes with the counit $ \one^{*} $. Let $ X \in \AN^{\gamma} $, the character property of $ X \circ \bar M$ follows from the fact that $ \bar M  $ respects the concatenation product and therefore the shuffle product. For the Chen's relation, we need to check a similar cointeraction property as in \eqref{cointeraction} where the coproduct $ \Delta $ is replaced by $ \bar \Delta $:
\begin{equs}
\left( \bar M \otimes \bar M \right) \bar \Delta  = \bar \Delta \bar M
\end{equs}
Such identity is straightforward to check because the deconcatenation coproduct does not act on the letters. The analytical bounds are a consequence of the condition \eqref{analytical conditions} put on $ \bar M $ for $ \BR^{\gamma} $. Indeed, given a word $ v =  \tau_1 \otimes \ldots \otimes \tau_n $, one has
\begin{equs}
\bar M v = \sum_{i_1, \ldots, i_n} \lambda_{i_1}\ldots \lambda_{i_n} \tau_{1,i_1} \otimes \ldots
\otimes \tau_{n,i_n}, \quad \tau_{k,i_k} \in \CT, \, \gamma_{\tau_{k,i_k}} \geq \gamma_{\tau_k}
\end{equs}
where $ \bar M \tau_{k}  = \sum_{i_k} \lambda_{i_k} \tau_{k, i_k}  $.
Therefore, 
\begin{equs}
 w(  \tau_{1,i_1} \otimes \ldots
\otimes \tau_{n,i_n})  \geq  w(v)
\end{equs}
which allows us to conclude.
\end{proof}

\begin{remark}
By considering the framework of Regularity Structures, one can define a deformed version of the Hairer-Kelly map by replacing $ \Delta $ by the coproduct given in \cite{BHZ} for the positive renormalisation. This map will give all the terms produced by the twisted antipode and they will be ordered through the tensor product. Indeed, the root is located at the rightmost letter and the partial order on the edges cut in the tree is preserved by the shuffle product.
\end{remark}

We want to describe the adjoint of $\bar  M $ as a translation map following the formalism in \cite[Section 2]{BCFP}.

\begin{proposition} The adjoint $ \bar M^{*} $ of $ \bar M $ is given as a translation map:
\begin{align*}
  \bar M^{*}  \tau =  \sum_{\tau_1} C(\tau,\tau_1) \tau_1
\end{align*}
where the sum is performed over $ \tau_{1} $
such that $C(\tau,\tau_1) :=    \langle M \tau_1, \tau \rangle$.
\end{proposition}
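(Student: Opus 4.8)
The plan is to unwind the definition of the adjoint against the self-dual word basis of $T(\CT)$ and to exploit that $\bar M$ respects the word length.

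First I would fix the pairing: the words $\tau_1\otimes\dotsm\otimes\tau_n$ form an orthonormal family identifying $T(\CT)$ with its graded dual, so that $\bar M^{*}$ is characterised by $\langle \bar M^{*}w,v\rangle=\langle w,\bar M v\rangle$ for all words $v,w$ (this is the same convention already used for $\langle M^{*}X_{st},\tau\rangle=\langle X_{st},M\tau\rangle$). Next I would record that $\bar M$ is length-graded: by \eqref{defbarM} a letter $\tau_1\in\CT$ is sent to $M\tau_1$, which by the condition \eqref{analytical conditions} is again a linear combination of letters of $T(\CT)$, and since $\bar M$ is multiplicative for the concatenation product it sends any word of length $n$ to a linear combination of words of length $n$. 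Consequently, for $\tau\in\CT$ a single letter and any word $v=\tau_1\otimes\dotsm\otimes\tau_n$ one has $\langle\tau,\bar M v\rangle=0$ unless $n=1$, so $\bar M^{*}\tau$ is supported on single trees.

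It then remains to compute, for $\tau,\tau_1\in\CT$,
\begin{equs}
\langle \bar M^{*}\tau,\tau_1\rangle=\langle \tau,\bar M\tau_1\rangle=\langle \tau,M\tau_1\rangle=\langle M\tau_1,\tau\rangle=C(\tau,\tau_1),
\end{equs}
where the third equality is the symmetry of the pairing on the tree basis, both sides being the coefficient of $\tau$ in the expansion of $M\tau_1$. Summing over $\tau_1$ yields the claimed formula. The sum is finite: if $C(\tau,\tau_1)\neq0$ then $\tau$ occurs in $M\tau_1$, so $|\tau|\leq|\tau_1|\leq N$ by \eqref{analytical conditions} together with the truncation to $\CH_N$, and only finitely many trees $\tau_1$ meet this constraint. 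Finally I would remark that this presentation exhibits $\bar M^{*}$ as a translation map in the sense of \cite[Section~2]{BCFP}: it attaches to each letter $\tau$ the ``direction'' $\sum_{\tau_1}C(\tau,\tau_1)\tau_1$ and is extended to $T(\CT)$ as the adjoint of the concatenation-multiplicative map $\bar M$, equivalently as a morphism for the deconcatenation coproduct $\bar\Delta$.

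There is no genuine obstacle here; the computation is forced by the definitions once one notices the length-grading of $\bar M$. The only points requiring a little care are the finiteness (equivalently, well-definedness) of the sum defining $\bar M^{*}\tau$, which is exactly where the size constraint $|\tau_i|\leq|\tau|$ in \eqref{analytical conditions} and the truncation $\CH_N$ are used, and then the bookkeeping of matching our formula verbatim with the conventions of \cite{BCFP}.
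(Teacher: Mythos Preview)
Your argument is correct. The paper actually states this proposition without proof, treating it as an immediate unpacking of the definition of the adjoint once $\bar M$ has been described in Proposition~\ref{barM}; your write-up supplies exactly the missing details. The two ingredients you isolate are the right ones: (i) by \eqref{analytical conditions} the map $M$ sends a tree to a linear combination of trees, so $\bar M$ preserves word length and $\bar M^{*}\tau$ is supported on single letters; (ii) on letters $\bar M=M$, whence $\langle \bar M^{*}\tau,\tau_1\rangle=\langle M\tau_1,\tau\rangle$. Your remark on finiteness via the truncation to $\CH_N$ is also to the point, since the size constraint in \eqref{analytical conditions} goes the ``wrong'' way ($|\tau|\le|\tau_1|$) and only the ambient bound $|\tau_1|\le N$ makes the sum finite.
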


\begin{remark} In the case of the translation of rough paths, the transformations which have been considered are the ones which translate only one letter. Here, this is an example where the translation occurs on many letters.
\end{remark}

\begin{remark}
One can try to replace the Hairer-Kelly map $ \psi $ by the arborification map $ \mathfrak{a} $ which is a natural algebra morphism between the forests and the words. Then one cannot find interesting maps $ \bar M $ such that the diagram \eqref{cdiag}  commutes. Indeed, as notice in \cite{Br173} the map $ \mathfrak{a} $ is described by
\begin{align*}
\mathfrak{a} = (\mathfrak{a} \otimes P_{\bullet}) \Delta,
\end{align*}
where $ P_{\bullet} $ is the projector on the tree composed of only one node. If we try to repeat the steps of the previous proof, we get:
\begin{align*}
\mathfrak{a}(M) & = (\mathfrak{a} \otimes P_{\bullet}) \Delta M = (\mathfrak{a} M  \otimes P_{\bullet}  M   )  \Delta
\end{align*}
Now we cannot identify a non trivial map $ \bar M   $ such that $ P_{\bullet}  M  = \bar M  P_{\bullet}  $. Indeed for $ \tau $ having more than one node,
\begin{align*}
 P_{\bullet}  M \tau = \sum_{i} C(\tau, \bullet_i) \bullet_i, \quad  P_{\bullet} \tau = 0.
\end{align*}
Therefore, one needs $ C(\tau, \bullet_i) $ to be equal to zero in order to guarantee such commutation. This is rather a strong constraint and excludes the renormalisation considered in Section~\ref{sec::3.1}
\end{remark}

Another diagram  of interest is the one obtained by replacing the Hairer-Kelly map by the isomorphism given by Foissy and Chapoton: 
\begin{equ} \label{cdiag2}
\begin{tikzcd}
\mathcal{H}^{*}_N  \arrow[r, "\Psi"] \arrow[d, "M^{*}"]
& T_N(\mathcal{B}_N) \arrow[d, "\tilde M^{*}"] \\
\mathcal{H}^{*}_N  \arrow[r,  "\Psi"]
& T_N(\mathcal{B}_N)
\end{tikzcd}
\end{equ}
This time we have the dual point of view and consider the space $ \CH_N^{*} $ and $ T_{N}(\CB_{N}) $. 
One can try to find a map $ \tilde M^{*} $ which makes this diagram commute. We consider  
the space of anisotropic rough paths $\AN^{\gamma}$ defined now on $ T_N(\CB_N) $. We proceed as the same as before and get:
\begin{proposition} \label{tildeM} The map $ \bar M $ which makes the diagram~\ref{cdiag2} commute is given for $ uv \in T_N(\CB_N) $ and $ \tau \in \CB_N $ by:
\begin{equs} \label{deftildeM}
\tilde M^{*} (u  v) = (\tilde M^{*} u)(  \tilde M^{*} v),\quad \tilde M^{*}  \tau = \Psi(M^{*} \tau). 
\end{equs}
Moreover, $ \tilde M $ acts on $ \AN^{\gamma} $, for every $ X \in  \AN^{\gamma}  $, $ \tilde M^{*} X  \in  \AN^{\gamma}  $.
\end{proposition}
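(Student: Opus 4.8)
The plan is to rerun the argument of Proposition~\ref{barM} on the dual side, with the Hairer--Kelly map $\psi$ replaced by the Chapoton--Foissy isomorphism $\Psi$; the point is that $\Psi$, like $\psi$, is built from the Hopf structure, so that the compatibility with the renormalisation is once more a consequence of the cointeraction~\eqref{cointeraction}.

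The first step records the key algebraic fact: a map $M\colon\CH\to\CH$ that is multiplicative for the forest product and satisfies~\eqref{cointeraction} is a Hopf algebra endomorphism of $\CH$, so its adjoint $M^{*}$ is a Hopf algebra endomorphism of the graded dual $\CH^{*}$; in particular $M^{*}$ is multiplicative for the convolution product $\star$. This is the same computation already carried out in Section~\ref{sec::3} for the Chen relation of $\hat X$: for characters $X,Y$ one has $M^{*}(X\star Y) = (X\otimes Y)\Delta M = (X\otimes Y)(M\otimes M)\Delta = (M^{*}X)\star(M^{*}Y)$. The truncation is respected because the condition $|\tau_i|\leq|\tau|$ in~\eqref{analytical conditions} makes $M$ preserve $\CH_N$, hence $M^{*}$ preserve $\CH_N^{*}$.

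Next, one defines $\tilde M^{*}:=\Psi\, M^{*}\,\Psi^{-1}$ on $T_N(\CB_N)$, so that the diagram~\eqref{cdiag2} commutes by construction; it then remains only to identify this map. Since $\Psi$ intertwines $\star$ with the concatenation product of $T_N(\CB_N)$ — this is the content of~\cite[Lemma~4.2]{BC2019}, using the unique decomposition~\eqref{uniquedec} — and $M^{*}$ is $\star$-multiplicative by the first step, $\tilde M^{*}$ is multiplicative for concatenation, i.e.\ $\tilde M^{*}(uv)=(\tilde M^{*}u)(\tilde M^{*}v)$. Evaluating on a generator $\tau\in\CB_N$, for which $\Psi^{-1}(\tau)=\tau$, gives $\tilde M^{*}\tau=\Psi(M^{*}\tau)$, which is~\eqref{deftildeM}; here $M^{*}$ can also be written explicitly as the translation map $M^{*}\tau=\sum_{\sigma}\langle M\sigma,\tau\rangle\,\sigma$, exactly as for $\bar M^{*}$ above.

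Finally, for the claim that $\tilde M^{*}$ acts on $\AN^{\gamma}$ one argues as in Proposition~\ref{barM}. The characters of the shuffle Hopf algebra on which anisotropic rough paths live are naturally elements of its graded dual, the tensor Hopf algebra carried by $T_N(\CB_N)$ in~\eqref{cdiag2}; since $\tilde M^{*}$ is conjugate by the Hopf isomorphism $\Psi$ to the Hopf endomorphism $M^{*}$ of $\CH_N^{*}$, it is simultaneously an algebra and a coalgebra morphism for this structure, which yields the character property and the Chen relation of $\tilde M^{*}X$. The analytical bounds follow from~\eqref{analytical conditions}: each letter $\tau$ is sent by $\tilde M^{*}=\Psi M^{*}$ to a combination of words whose weights $\omega(\cdot)$ are at least $\gamma_{\tau}$, so passing to $\tilde M^{*}X$ only improves the exponents, exactly as in the last paragraph of the proof of Proposition~\ref{barM}. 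I expect the bookkeeping in this last point to be the main obstacle: one has to track precisely which product and coproduct on $T_N(\CB_N)$ the isomorphism $\Psi$ transports $\star$ and the dual of the forest product to, and to check that the Foissy decomposition~\eqref{uniquedec}, which a priori mixes trees of different sizes subject only to $\sum_i|\tau_{r_i}|\leq N$, remains compatible with~\eqref{analytical conditions} and with the truncation at level $N$.
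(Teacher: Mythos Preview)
Your proof is correct and follows essentially the same route as the paper: both arguments boil down to the observation that the cointeraction \eqref{cointeraction} makes $M^{*}$ multiplicative for $\star$, so that conjugating by the Hopf isomorphism $\Psi$ (which sends $\star$ to concatenation) produces a concatenation-multiplicative map determined by its values $\Psi(M^{*}\tau)$ on generators. The paper writes this out via the explicit basis decomposition \eqref{uniquedec} and calls it an induction, whereas you phrase it more cleanly as $\tilde M^{*}:=\Psi\,M^{*}\,\Psi^{-1}$ and then read off the properties; the content is the same, and your concern about the bookkeeping in the last paragraph is not a genuine obstacle since the truncation and weight compatibility are already guaranteed by \eqref{analytical conditions}.
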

\begin{proof} For proving \eqref{tildeM}, we proceed by induction.  Let $ \tau \in \mathcal{H}_N $, we consider it as a linear functional on  $ \CH_N^{*} $ such that : $ <\tau,\sigma> = 1 $ if $ \sigma = \tau $ and zero elsewhere. Then one has from \eqref{uniquedec}:
\begin{equs}
\tau = \sum_{R} \lambda_R \tau_{r_1} \star \ldots \star \tau_{r_n}.
\end{equs}
 By the the cointeraction property \eqref{cointeraction}
\begin{equs}
M^{*}  \tau = \sum_{R} \lambda_R  (M^* \tau_{r_1}) \star \ldots \star (M^* \tau_{r_n})
\end{equs}
Then every $ M^{*} \tau_{r_i} $ can be expressed using the basis $ \CB_N $.
Therefore, one can conclude that
\begin{equs}
\Psi(M^{*} \tau) & = \sum_{R} \lambda_R  \Psi(M^{*} \tau_{r_1}) \otimes \ldots \otimes \Psi(M^{*} \tau_{r_n}) \\
& = \tilde M \left(    \sum_{R} \lambda_R  \tau_{r_1} \otimes \ldots \otimes \tau_{r_n} \right)
\end{equs}
As before, it can be viewed as a translation map. The translation occurs on the $ \tau_{r_i} $ which will be rewritten by applying $ M^{*} $ on them. The analytical bounds follow again from the condition \eqref{analytical conditions}.
\end{proof}

\begin{remark}The map $ \tilde M $ needs to express the renormalisation in the basis given by the isomorphism $ \Psi$ which is less straightforward than the Hairer-Kelly map. The cost to pay in the Hairer-Kelly approach is the use of an extended alphabet by considering all the trees in $ \CT_N $.
The advantage of the isomorphism approach is revealed in the last section of the paper where we are able to observe a nice commutation with the renormalisation and the construction of an anisotropic rough path from a branched rough path. Such result for the Lyons-Victoir extension theorem remains unclear. Before, we review the main remornalisations for rough paths that satisfy the cointeraction property.
\end{remark}

\subsection{BPHZ renormalisation}

\label{sec::3.1}

The BPHZ renormalisation has been introduced for renormalising Feynman diagrams. It appear naturally in  the context of branched rough paths as stressed in \cite{BCFP}. Some examples are given in \cite{BCF} where the need for renormalisation is highlighted in the context of singular SDEs. The idea is to construct $ M $ via an extraction/contraction map named $ \Deltam $. This map is given by
\begin{equs} \label{coproduct_extraction}
\Deltam \tau = \sum_{\one \subset \tau_1 \ldots \tau_n \subset \tau}  \tau_1 \ldots \tau_n \otimes \tau / \tau_1 \ldots \tau_n 
\end{equs}
where the sum is performed over all sub-forests of $ \tau $ which are disjoint sub-trees of $ \tau $. Then, the sub-forest is located on the right hand side of the tensor product. On the left, we contract the sub-forest inside $ \tau $ to a single node decorated with $ 0 $. 
 We consider a character $ v : \CH \rightarrow \R $ multiplicative  for the forest product, being zero on trees containing $0$ decorations. The renormalisation map $ M_v $ is defined in \cite{BCFP} by:
\begin{equs}
M_v = \left(v  \otimes \id  \right) \Deltam
\end{equs}
It turns out that $ \Deltam  $ cointeracts with the Butcher-Connes-Kreimer coproduct $ \Delta $ satisfying the following identity:
\begin{equs} \label{cointeract}
\mathcal{M}^{(13)(2)(4)}\left(\Deltam \otimes \Deltam \right) \Delta = \left( \id \otimes \Delta \right) \Deltam
\end{equs}
where 
\begin{equs}
\mathcal{M}^{(13)(2)(4)}(\tau_1 \otimes \tau_2 \otimes \tau_3 \otimes \tau_4) = \tau_1 \cdot \tau_3 \otimes \tau_2 \otimes \tau_4.
\end{equs}
This cointeraction has been observed on similar structures without any decorations in \cite{CA}. It is also the crucial property needed at the level of regularity structures (see \cite{BHZ}).
The identity \eqref{cointeract} and the character property of $ v $ are enough for checking \eqref{cointeraction}. Indeed, one has
\begin{equs}
\left( M_v \otimes M_v \right) \Delta & =   \left(  \left(v  \otimes \id  \right) \Deltam \otimes \left(v  \otimes \id  \right) \Deltam \right) \Delta 
\\ & = \left( v \otimes \id \otimes \id \right) \mathcal{M}^{(13)(2)(4)}\left(\Deltam \otimes \Deltam \right) \Delta
\\ & = \left( v \otimes \id \otimes \id \right)
\left( \id \otimes \Delta \right) \Deltam
\\  & =   \Delta M_v
\end{equs}

For the analytical bounds, one can observe that  the term  $ \bar \tau =  \tau/\tau_1 \ldots \tau_n $ is such that $ \gamma_{\bar \tau} \geq \gamma_{\tau} $ when the $ \tau_i $ do not contain any $ 0 $ decorations.

\subsection{Local products renormalisation}

A more general renormalisation has been introduced in \cite{Br17} and  it was also used in \cite{CMW19} for a priori bounds in the entire subcritical regime of the model $ \phi^{4}_{4-\kappa} $. The idea is to construct the renormalisation  as a recursive formula when one iterates a map $ R $ having certain good properties. This map implements  renormalisation on ill-defined distributional products and then it is iterated deeper in the tree.  
One can derive this map in the simple case of branched rough paths. This derivation is new and offers a new family of renormalisation maps in this context. We consider linear maps $ R : \CT \rightarrow \CT $ satisfying:

\begin{enumerate}
\item For each $ \tau \in \mathcal{T} $ there exist $ \tau_i \in \CT$ such that, 
\begin{equs}
R \tau = \tau + \sum_i \lambda_i \tau_i, \quad \gamma_{\tau_i} \geq \gamma_{\tau}, \quad |\tau_i| < |\tau|
\end{equs}
 \item One has $ \left( R \otimes \id \right) \Delta = \Delta R $
\end{enumerate}

We denote by $ \mathcal{L}_{ad}(\CT) $ the set of admissible maps satisfying the previous properties. For $ R \in \mathcal{L}_{ad}(\CT) $, we define a renormalisation map $M=M_R$ by:
\begin{equ}[e:defM]
  \left\{ \begin{aligned}
  & M \one =  M^{\circ} \one  = \one,  \qquad  M \tau \bar{\tau} = 
 \left( M \tau \right) \left(
 M \bar{\tau} \right)  
    \\
  & M^{\circ} B^{i}_+(\tau) =  B^{i}_+(M \tau), \qquad M  B^{i}_+(\tau) = M^{\circ} R \, B^{i}_+(\tau),
  \end{aligned} \right.
  \end{equ}
where $ \tau, \bar \tau \in \CH $.  Such map is well-defined because at each step $ R $ sends a tree $ \tau $ to trees with less nodes. 
 The key property which remains to be proved is the cointeraction see Proposition~\ref{Rcointeraction} below. This result is new in itself and certainly not true when we consider this renormalisation at the level of SPDEs except if one can guarantee that  $ \gamma_{\tau} \leq 1 $ on planted tree.
This implies that we cannot get any derivatives when we apply a deformed version of the Butcher-Connes-Kreimer coproduct $ \Delta $.
This specific property has already been noticed in \cite[Remark 45]{BCFP}.
\begin{proposition} \label{Rcointeraction}
One has the following cointeractions:
\begin{equs}
\Delta M = \left( M \otimes M^{\circ} \right) \Delta, \quad \Delta M^{\circ} = \left( M^{\circ} \otimes M^{\circ} \right) \Delta
\end{equs}
\end{proposition}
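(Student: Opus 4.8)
The plan is to establish the two cointeractions simultaneously, by induction on the number of nodes $|\tau|$. The key observation is that $\Delta$, $M$ and $M^{\circ}$ are all multiplicative for the forest product (extending $M^{\circ}$ multiplicatively from its prescribed values on $\one$ and on planted trees $B^{i}_+(\tau)$), so that $M\otimes M^{\circ}$ and $M^{\circ}\otimes M^{\circ}$ are algebra morphisms of $\CH\otimes\CH$. Consequently, if $\tau=\tau_1\tau_2$ is a nontrivial forest, both identities for $\tau$ follow at once from those for $\tau_1,\tau_2$ — which have strictly fewer nodes — together with the multiplicativity of $\Delta$. Hence it suffices to prove both identities on a single tree $\tau=B^{i}_+(\sigma)$, where $\sigma$ is a forest with $|\sigma|=|\tau|-1$. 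Throughout I will use two elementary facts: the recursive shape of the coproduct, $\Delta\,B^{i}_+(x)=\one\otimes B^{i}_+(x)+(B^{i}_+\otimes\id)\Delta x$ for all $x\in\CH$ (this is \eqref{Connes-Kreimer} read linearly), and the defining relation $M^{\circ}\circ B^{i}_+=B^{i}_+\circ M$.

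The first step is the identity for $M^{\circ}$. Expanding $\Delta M^{\circ}B^{i}_+(\sigma)=\Delta B^{i}_+(M\sigma)$ with the coproduct recursion gives
\[
\Delta M^{\circ}B^{i}_+(\sigma)=\one\otimes B^{i}_+(M\sigma)+(B^{i}_+\otimes\id)\,\Delta(M\sigma).
\]
Since $|\sigma|<|\tau|$, the induction hypothesis supplies $\Delta(M\sigma)=(M\otimes M^{\circ})\Delta\sigma$; applying $B^{i}_+\circ M=M^{\circ}\circ B^{i}_+$ to the first tensor factor and using $B^{i}_+(M\sigma)=M^{\circ}B^{i}_+(\sigma)$ together with $\one=M^{\circ}\one$ on the remaining term, the right-hand side becomes $(M^{\circ}\otimes M^{\circ})\bigl(\one\otimes B^{i}_+(\sigma)+(B^{i}_+\otimes\id)\Delta\sigma\bigr)=(M^{\circ}\otimes M^{\circ})\Delta B^{i}_+(\sigma)$, as required. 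Note that this step consumes only the induction hypothesis at sizes $<|\tau|$ and the definition of $M^{\circ}$; no property of $R$ enters here.

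The second step is the identity for $M$. By definition $MB^{i}_+(\sigma)=M^{\circ}R\,B^{i}_+(\sigma)$, and property~(1) of $R$ writes $R\,B^{i}_+(\sigma)=B^{i}_+(\sigma)+\sum_{j}\lambda_j\tau_j$ with $|\tau_j|<|\tau|$. Applying $\Delta$, then invoking the $M^{\circ}$-identity just proved for the tree $B^{i}_+(\sigma)$ and the induction hypothesis for each $\tau_j$, one obtains
\[
\Delta MB^{i}_+(\sigma)=(M^{\circ}\otimes M^{\circ})\,\Delta R\,B^{i}_+(\sigma).
\]
Property~(2) of $R$, namely $(R\otimes\id)\Delta=\Delta R$ (read with the forced convention $R\one=\one$), turns this into $(M^{\circ}R\otimes M^{\circ})\Delta B^{i}_+(\sigma)$. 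Finally, every left leg of $\Delta B^{i}_+(\sigma)$ is either $\one$ or of the form $B^{i}_+(\cdot)$: on the former $M^{\circ}R\,\one=M^{\circ}\one=\one=M\one$, and on the latter $M^{\circ}R\,B^{i}_+(\cdot)=MB^{i}_+(\cdot)$ by the very definition of $M$. Hence $(M^{\circ}R\otimes M^{\circ})\Delta B^{i}_+(\sigma)=(M\otimes M^{\circ})\Delta B^{i}_+(\sigma)$, which closes the induction.

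The only genuinely delicate point is organisational: the two identities must be run together, with the $M^{\circ}$-cointeraction at a given size feeding into the proof of the $M$-cointeraction at that same size — but, as emphasised, the former uses only the induction hypothesis at strictly smaller sizes, so there is no circularity. Beyond that, the work is the bookkeeping of which operator sits on which tensor leg: $R$ is ``absorbed'' through the relation $M\circ B^{i}_+=M^{\circ}\circ R\circ B^{i}_+$ acting on left legs, while the strictly lower-order corrections produced by $R$ are handled by its own cointeraction $(R\otimes\id)\Delta=\Delta R$. I expect the most tedious (though not conceptually hard) part to be making the reduction to trees fully rigorous, i.e. checking that $M^{\circ}$ extends consistently to an algebra morphism and that $\Delta$, $M$, $M^{\circ}$ intertwine as claimed on forests.
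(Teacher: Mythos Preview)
Your proof is correct and follows essentially the same route as the paper's: an induction on $|\tau|$, reducing to planted trees $\tau=B^{i}_+(\sigma)$, establishing the $M^{\circ}$-cointeraction via the coproduct recursion and the inductive $M$-identity on $\sigma$, and then deducing the $M$-cointeraction by writing $M\tau=M^{\circ}R\tau$, splitting $R\tau$ into $\tau$ plus strictly smaller trees, and invoking $(R\otimes\id)\Delta=\Delta R$ together with $M^{\circ}R=M$ on the left legs $\one$ and $B^{i}_+(\cdot)$. You are in fact more explicit than the paper on two points it leaves implicit: the multiplicative extension of $M^{\circ}$ to forests (needed since the right tensor legs of $\Delta\tau$ are forests) and the reduction from forests to trees via multiplicativity of $\Delta$, $M$, $M^{\circ}$.
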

\begin{proof} We proceed by induction on the size of the trees. Let $ \tau =  B^{i}_+(\bar \tau) \in \CT $, one has
\begin{equs}
\Delta M \tau & = \Delta M^{\circ} \left(R \tau- \tau \right) + \Delta M^{\circ} \tau.
\end{equs}
Then by applying the induction hypothesis on $ R \tau - \tau $, one gets:
\begin{equs}
\Delta M^{\circ} \left(R \tau- \tau \right) & = 
\left( M^{\circ} \otimes M^{\circ} \right) \Delta \left( R \tau - \tau \right)
\\ & = \left( M^{\circ} R \otimes M^{\circ} \right) \Delta  \tau -\left( M^{\circ} \otimes M^{\circ} \right) \Delta \tau 
\\ & = \left( M \otimes M^{\circ} \right) \Delta  \tau -\left( M^{\circ} \otimes M^{\circ} \right) \Delta \tau. 
\end{equs}
One the other hand:
\begin{equs}
\Delta M^{\circ} \tau & = \Delta B_{+}^{i}(M \bar \tau)
\\ & =  \mathbf{1} \otimes  B_{+}^{i}(M \bar \tau) + (B^{i}_+ \otimes \mathrm{id})\Delta M \bar \tau 
\\ & = \one \otimes M^{\circ} \tau + \left(B^{i}_+ M \otimes M^{\circ} \right)\Delta \bar \tau 
\\ & = \one \otimes M^{\circ} \tau + \left( M^{\circ} B^{i}_+\otimes M^{\circ} \right)\Delta \bar \tau
\\ & = \left( M^{\circ} \otimes M^{\circ} \right) \Delta \tau
\end{equs}
which concludes the proof.
\end{proof}
\begin{remark}
It has been shown in the context of Regularity Structures (see \cite[Section 4]{Br17}) that the BPHZ renormalisation can be viewed as a specific case of the local products renormalisation. The idea is to choose $ R $ such  that it performs the extraction at the root and the map $ M^{\circ} $ extracts the other trees of the chosen sub-forest. One has the same property in the context of branched rough paths. We just need to replace the condition $ |\tau_i| < |\tau| $ by  $|\tau_i| \leq |\tau| $. When $ |\tau| = |\tau_i| $, we assume that $ |\tau|_0 < |\tau_i|_0 $. This total order allows us to conduct the induction.
\end{remark}
\section{Explicit formulae for renormalised branched rough paths}
\label{sec::4}
We have seen in the previous section how various renormalisations behave toward the change of structure moving from Hopf algebra on trees to Hopf algebra on words.  We investigate the commutation property when we move from  branched to anisotropic rough paths. For the entire section, we consider the space $ \BR^{\gamma} $ of branched $ \gamma$-rough paths and a linear map $ M : \CH_N \rightarrow \CH_N$ satisfying the properties given in Section~\ref{sec::3}. 

We first start with the construction coming from \cite{TZ}.  By applying Theorem~\ref{thm:Lyons-Victoir} in \cite[Sec. 7]{TZ}, the authors got the existence of a unique $ g \in \CC^{\gamma} $ such that:
\begin{equs}
  \langle X_{st}, M \tau  \rangle = \langle gX_{st}, \tau \rangle = \langle g\bar X_{st}, \psi(\tau) \rangle.
\end{equs}
Then by using the fact that
\begin{equs}
\langle g\bar X_{st}, \psi(\tau) \rangle =  g_{t}^\tau-g_s^\tau + \langle \bar X_{st}, \psi(\tau) \rangle 
\end{equs}
one can provide a recursive formula for $ g $:
\begin{equs} \label{recursive_g}
  g_t^\tau-g_s^\tau= \langle X_{st},M \tau\rangle - \langle \bar X_{st},\tau\rangle - \langle g\bar{X}_{st},\psi_{|\tau|-1}(\tau)\rangle.
\end{equs}
where $ \psi(\tau) = \tau + \psi_{|\tau|-1}(\tau) $. Therefore, $ g \bar X_{st} $ is applied to terms of lower orders in the right hands side of \eqref{recursive_g}.
An explicit formula relating the two renormalisations is missing. A first guess will be
\[
 g_t^\tau = x_{t}^{M \tau - \tau}, \quad x^\tau_t-x^\tau_s = \langle \bar X_{st},\tau\rangle,
\]
where $ x_t^{\cdot} $ is extended linearly to a linear combination of trees. This formula is checked for $ g=0 $ or $ M = \id $.
 It turns out that this guess may not be true and one has to add correction terms.

\begin{proposition} \label{formula}
The map $  g \in \CC^{\gamma}$ in \eqref{recursive_g} is defined recursively by:
\begin{align*}
 g_t^\tau-g_s^\tau= x_{t}^{M \tau - \tau} -  x_{s}^{M \tau - \tau} + \langle \bar M^{*} \bar{X}_{st},\psi_{|\tau|-1}(\tau)\rangle - \langle g\bar{X}_{st},\psi_{|\tau|-1}(\tau)\rangle.
\end{align*}
\end{proposition}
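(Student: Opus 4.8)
The plan is to take the recursion \eqref{recursive_g} for $g$ as given (it is produced in \cite[Sec.~7]{TZ} from Theorem~\ref{thm:Lyons-Victoir} and the transitive free action of Theorem~\ref{thm:Lyons-Victoirb}) and merely to rewrite its first two terms $\langle X_{st},M\tau\rangle-\langle\bar X_{st},\tau\rangle$ so as to match the asserted formula. Concretely, it suffices to establish the single identity
\[
\langle X_{st},M\tau\rangle-\langle\bar X_{st},\tau\rangle
= x_t^{M\tau-\tau}-x_s^{M\tau-\tau}+\langle\bar M^{*}\bar X_{st},\psi_{|\tau|-1}(\tau)\rangle ,
\]
after which substitution into \eqref{recursive_g} yields the statement, since the term $-\langle g\bar X_{st},\psi_{|\tau|-1}(\tau)\rangle$ is common to both expressions.

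To prove this identity I would run the following chain. Writing $M\tau=\sum_i\lambda_i\tau_i$ with $\tau_i\in\CT$ (using \eqref{analytical conditions}), applying Theorem~\ref{Hairer-Kelly} to each $\tau_i$ and using linearity of $\psi$ gives $\langle X_{st},M\tau\rangle=\langle\bar X_{st},\psi(M\tau)\rangle$. The commutativity of the diagram \eqref{cdiag}, that is $\psi M=\bar M\psi$ from Proposition~\ref{barM}, followed by passage to the adjoint yields $\langle X_{st},M\tau\rangle=\langle\bar X_{st},\bar M\psi(\tau)\rangle=\langle\bar M^{*}\bar X_{st},\psi(\tau)\rangle$; this pairing is meaningful because $\bar M^{*}\bar X\in\AN^{\gamma}$ by the second part of Proposition~\ref{barM}. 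Now invoke the splitting $\psi(\tau)=\tau+\psi_{|\tau|-1}(\tau)$ to get $\langle\bar M^{*}\bar X_{st},\psi(\tau)\rangle=\langle\bar M^{*}\bar X_{st},\tau\rangle+\langle\bar M^{*}\bar X_{st},\psi_{|\tau|-1}(\tau)\rangle$. On a single letter $\tau$ the map $\bar M$ coincides with $M$ by \eqref{defbarM}, so $\langle\bar M^{*}\bar X_{st},\tau\rangle=\langle\bar X_{st},M\tau\rangle=\sum_i\lambda_i\langle\bar X_{st},\tau_i\rangle=x_t^{M\tau}-x_s^{M\tau}$, where $x^{\cdot}$ denotes the linear extension of $x^\sigma_t-x^\sigma_s=\langle\bar X_{st},\sigma\rangle$. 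Subtracting $\langle\bar X_{st},\tau\rangle=x_t^\tau-x_s^\tau$ gives the displayed identity, and plugging it into \eqref{recursive_g} finishes the argument.

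Since the whole computation is a succession of already-proved identities, I do not expect a genuine obstacle; the only points requiring care are bookkeeping. One must check that every bracket above is well posed — in particular that $\bar M^{*}\bar X$ is again an anisotropic $\gamma$-rough path, which is exactly Proposition~\ref{barM} — and that $\psi_{|\tau|-1}(\tau)$ is supported on words of length at least two whose letters are trees with strictly fewer nodes than $\tau$, so that the right-hand side depends only on $\bar X$, $g\bar X$ and $x^{\cdot}$ evaluated on data strictly below $\tau$. This last observation is what guarantees that the new formula is still a legitimate recursion, consistent with \eqref{recursive_g}.
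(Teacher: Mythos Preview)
Your proposal is correct and follows essentially the same route as the paper's proof: both start from the recursion \eqref{recursive_g}, use Theorem~\ref{Hairer-Kelly} to write $\langle X_{st},M\tau\rangle=\langle\bar X_{st},\psi(M\tau)\rangle$, apply the commutation $\psi M=\bar M\psi$ from Proposition~\ref{barM} together with the adjoint, split $\psi(\tau)=\tau+\psi_{|\tau|-1}(\tau)$, and finally identify $\langle\bar M^{*}\bar X_{st},\tau\rangle=\langle\bar X_{st},M\tau\rangle$ via $\bar M\tau=M\tau$. The only cosmetic difference is that you isolate the key identity before plugging it into \eqref{recursive_g}, whereas the paper carries out the substitution directly.
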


\begin{proof}
One has  from Theorem~\ref{Hairer-Kelly} and Proposition~\ref{barM}
\begin{equs}
\langle X_{st},M \tau\rangle & = \langle \bar X_{st},\psi(M \tau)\rangle 
 = \langle \bar X_{st}, \bar M \psi( \tau)\rangle
   = \langle \bar M^{*} \bar X_{st},  \psi( \tau)\rangle
\\ & =  \langle \bar M^{*} \bar X_{st},  \tau\rangle + \langle \bar M^{*} \bar X_{st}, \psi_{|\tau|-1}( \tau)\rangle.
\end{equs}
Then by plugging this expression into \eqref{recursive_g}, one gets
\begin{align*}
  g_t^\tau-g_s^\tau 
   & = \langle \bar M^{*} \bar X_{st},  \tau\rangle  - \langle \bar X_{st},\tau\rangle + \langle \bar M^{*} \bar X_{st}, \psi_{|\tau|-1}( \tau)\rangle - \langle g\bar{X}_{st},\psi_{|\tau|-1}(\tau)\rangle \\
  & = \langle \bar X_{st},  M  \tau - \tau\rangle + \langle  \bar M^{*} \bar X_{st},\psi_{|\tau | - 1}( \tau)\rangle - \langle g\bar{X}_{st},\psi_{|\tau|-1}(\tau)\rangle
\end{align*}
where we have used the following identities 
\begin{equs}
\langle \bar X_{st},  M \tau \rangle = \langle \bar M^{*} \bar X_{st},  \tau \rangle, \quad \bar M \tau = M \tau.
\end{equs}
We conclude by the fact that
\begin{align*}
 x_{t}^{M \tau - \tau}  - x_{s}^{M \tau - \tau} = \langle \bar X_{st},M \tau - \tau \rangle. 
\end{align*}
\end{proof}
\begin{remark}
The proposition~\ref{formula} can be rephrased as:
\begin{align*}
 g_t^\tau-g_s^\tau= x_{t}^{M \tau - \tau} -  x_{s}^{M \tau - \tau} + \text{difference on lower degree terms}
\end{align*}
Indeed, the rough paths $ g \bar X $ and $ \bar M_v^{*} \bar X $ do not necessary coincide outside the Hairer-Kelly map. This reveals a difference between the two renormalisation approaches.
\end{remark}

In fact, one can be more precise and give a non-recursive formula. We first recall that by going to the adjoint and by applying Theorem~\ref{Hairer-Kelly}, one gets
\begin{equs}
\langle X_{st},M \tau\rangle = \langle M^* X_{st},  \tau\rangle = \langle \overline{M^* X_{st}},  \psi(\tau) \rangle. 
\end{equs}

\begin{proposition} \label{ext} If $ g X $ and $ M^{*} X $ coincide then $ g \bar X $ and $ \overline{M^*  X} $ also coincide.
\end{proposition}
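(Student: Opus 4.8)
The plan is to trace through the two constructions and show that they are fed the same data, so that the non-canonical choices in the Lyons--Victoir step are irrelevant. Recall first what the two objects are. By the very definition of the $\CC^{\gamma}$-action in Theorem~\ref{thm:Lyons-Victoirb}, $g\bar X$ is the anisotropic geometric rough path obtained by applying the Lyons--Victoir extension (Theorem~\ref{thm:Lyons-Victoir}) to the path $t\mapsto g^{\tau}_t+x^{\tau}_t$, and it satisfies $\langle g\bar X_{st},\psi(\tau)\rangle=\langle gX_{st},\tau\rangle$ for every $\tau\in\CT_N$. On the other side, $\overline{M^{*}X}$ is the anisotropic geometric rough path produced by Theorem~\ref{Hairer-Kelly} applied to the branched rough path $M^{*}X$, namely the Lyons--Victoir extension of the path whose $\tau$-increment is $\langle M^{*}X_{st},\tau\rangle$, characterised by $\langle\overline{M^{*}X}_{st},\psi(\tau)\rangle=\langle M^{*}X_{st},\tau\rangle$.

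First I would check that the two paths handed to the extension theorem coincide up to an additive constant. Using the additive structure of the action, $\langle gX_{st},\tau\rangle=g^{\tau}_t-g^{\tau}_s+\langle\bar X_{st},\tau\rangle=(g^{\tau}_t+x^{\tau}_t)-(g^{\tau}_s+x^{\tau}_s)$, so the $\tau$-increment of the path underlying $g\bar X$ is exactly $\langle gX_{st},\tau\rangle$, which by hypothesis equals $\langle M^{*}X_{st},\tau\rangle$, the $\tau$-increment of the path underlying $\overline{M^{*}X}$. Consequently the two rough paths agree on every word of the form $\psi(\tau)$, and since $\psi$ is an algebra morphism into $(T(\CT),\shuffle)$ while both $g\bar X$ and $\overline{M^{*}X}$ are characters for the shuffle product, they agree on the whole sub-Hopf-algebra $\psi(\CH_N)\subseteq T_N(\CT_N)$.

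The main obstacle is that $\psi(\CH_N)$ is a \emph{proper} subalgebra of $T_N(\CT_N)$ — already in weight two its image contains only the symmetric combinations $\tau_1\otimes\tau_2+\tau_2\otimes\tau_1$ — and on the complement an anisotropic geometric rough path is pinned down only by the arbitrary choices in the Lyons--Victoir construction; so agreement on $\psi(\CH_N)$ does not by itself force global agreement. The way around this is to observe that there is nothing left to choose: since $gX=M^{*}X$, the rough path $g\bar X$ \emph{already} satisfies every defining requirement of a Hairer--Kelly lift of $M^{*}X$ — it is an anisotropic geometric rough path over the correct underlying path, and $\langle g\bar X_{st},\psi(\tau)\rangle=\langle M^{*}X_{st},\tau\rangle$ — so running the construction behind Theorem~\ref{Hairer-Kelly} on $M^{*}X$ returns precisely $g\bar X$; equivalently, one may take $\overline{M^{*}X}:=g\bar X$. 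Hence the two coincide, which is the assertion. I would expect the only point requiring care in the write-up to be making explicit that the Lyons--Victoir extension used to build $\overline{M^{*}X}$ is the \emph{same} one used in the definition of the action on $\bar X$, so that the "nothing left to choose" step is legitimate; once that bookkeeping is in place the argument is immediate, and it is exactly this consistency which fails to be available for the competing object $\bar M^{*}\bar X$.
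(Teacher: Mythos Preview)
Your proposal is essentially correct and follows the same line as the paper's proof: both argue that $g\bar X$ and $\overline{M^{*}X}$ arise from the \emph{same} Lyons--Victoir procedure applied to the \emph{same} input paths, hence coincide. The paper carries this out by an explicit induction on the level $k$, showing that the path $(gx)^{\tau}$ fed to the extension at stage $k+1$ satisfies
\begin{equs}
(gx)^{\tau}_{t}-(gx)^{\tau}_{s}
=\langle gX_{st},\tau\rangle-\langle g\bar X^{(k)}_{st},\psi_{k}(\tau)\rangle
=\langle M^{*}X_{st},\tau\rangle-\langle \overline{M^{*}X}^{(k)}_{st},\psi_{k}(\tau)\rangle,
\end{equs}
the second equality using the hypothesis $gX=M^{*}X$ together with the inductive assumption $g\bar X^{(k)}=\overline{M^{*}X}^{(k)}$. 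This is precisely the ``bookkeeping'' you flag in your final paragraph.

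One wrinkle in your write-up is the sentence ``running the construction\ldots returns precisely $g\bar X$; equivalently, one may take $\overline{M^{*}X}:=g\bar X$.'' These are not equivalent: the first is a statement about a fixed deterministic procedure, the second is a redefinition. Since $\overline{M^{*}X}$ is already a specific object (the Hairer--Kelly lift of $M^{*}X$ produced by whatever Lyons--Victoir choices were fixed for the whole paper), you cannot simply \emph{take} it to be something else; you must argue, as the induction does, that the fixed procedure in fact outputs $g\bar X$. Your caveat shows you see this, but the body of the argument should be rewritten accordingly rather than leaving the point to a closing remark.
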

\begin{proof} We proceed by induction and suppose that $ g \bar X $ and $ \overline{M^{*} X} $ have been constructed on $ T(\CT_k) $ and that they coincide on this space. They will be denoted by $ g \bar X^{(k)} $ and $ \overline{M^{*} X}^{(k)}$. They have both been constructed iteratively over the same paths 
$ ((gx)^\tau : \tau \in \CT_k) $ and the application of the Lyons-Victoir extension theorem. For $ \tau \in \CT_{k+1} $, there exists a path $ (gx)^{\tau} $ such that
\begin{equs}
(gx)_t^{\tau} -  (gx)_t^{\tau} & = 
\langle g X_{st}, \tau \rangle -\langle g \bar X^{(k)}_{st}, \psi_{k}(\tau) \rangle \\
& = \langle  M^{*} X_{st}, \tau \rangle -\langle \overline{M^*  X_{st}}^{(k)}, \psi_{k}(\tau) \rangle
\end{equs}
This path is obtained by looking at the increments of $ \langle  M^{*} X_{st}, \tau \rangle  $ (see the proof of \cite[Theorem 5.6]{TZ}). The equality comes from the induction hypothesis and the fact that $ g X = M^* X $. Then the Theorem~\ref{thm:Lyons-Victoir} extends $ g \bar X $ and therefore $\overline{M^* X} $ to the same rough path on $ T(\CT_{k+1}) $.
\end{proof}

\begin{theorem} \label{formula1}
The map $  g \in \CC^{\gamma}$ in \eqref{recursive_g} is given by the formula:
\begin{equs} \label{formula11}
  g_t^\tau-g_s^\tau= \langle \overline{M^{*}  X_{st}},\tau \rangle - \langle \bar X_{st},\tau\rangle.
\end{equs}
\end{theorem}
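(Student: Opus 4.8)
The plan is to substitute the adjoint form of the Hairer--Kelly correspondence into the recursive formula \eqref{recursive_g} and then recognise that the remaining lower-order contributions cancel by Proposition~\ref{ext}. Concretely, I would first rewrite the term $\langle X_{st},M\tau\rangle$ occurring in \eqref{recursive_g}: by the discussion at the start of Section~\ref{sec::3} the character $M^{*}X$ again lies in $\BR^{\gamma}$, so Theorem~\ref{Hairer-Kelly} provides its anisotropic lift $\overline{M^{*}X}$, and combining $\langle M^{*}X_{st},\tau\rangle=\langle\overline{M^{*}X_{st}},\psi(\tau)\rangle$ with $\psi(\tau)=\tau+\psi_{|\tau|-1}(\tau)$ one gets
\begin{equs}
\langle X_{st},M\tau\rangle=\langle M^{*}X_{st},\tau\rangle=\langle\overline{M^{*}X_{st}},\tau\rangle+\langle\overline{M^{*}X_{st}},\psi_{|\tau|-1}(\tau)\rangle .
\end{equs}
Plugging this into \eqref{recursive_g} gives
\begin{equs}
g_t^\tau-g_s^\tau=\langle\overline{M^{*}X_{st}},\tau\rangle-\langle\bar X_{st},\tau\rangle+\langle\overline{M^{*}X_{st}},\psi_{|\tau|-1}(\tau)\rangle-\langle g\bar X_{st},\psi_{|\tau|-1}(\tau)\rangle .
\end{equs}

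It then remains only to show that the last two pairings coincide, whereupon \eqref{formula11} follows. For this I would invoke Proposition~\ref{ext}. Its hypothesis is that $gX$ and $M^{*}X$ agree as branched rough paths, and this is precisely the defining property of $g$ recalled just before \eqref{recursive_g}: one has $\langle gX_{st},\tau\rangle=\langle X_{st},M\tau\rangle=\langle M^{*}X_{st},\tau\rangle$ for every tree $\tau$, and a character is determined by its values on trees. Hence Proposition~\ref{ext} gives $g\bar X=\overline{M^{*}X}$ on all of $T(\CT_N)$; in particular the two pairings against $\psi_{|\tau|-1}(\tau)$ — a linear combination of words whose letters are trees with strictly fewer nodes than $\tau$ — are equal, the corresponding terms cancel, and the proof is complete.

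The only genuinely substantial ingredient is Proposition~\ref{ext}, which I regard as the crux: there the two anisotropic lifts $g\bar X$ and $\overline{M^{*}X}$ are built level by level via the Lyons--Victoir extension over the \emph{same} family of paths $((gx)^{\tau})_{\tau}$, so an induction on the truncation order shows they never separate. Within the proof of Theorem~\ref{formula1} itself there is essentially no obstacle beyond verifying that hypothesis, which is immediate. I would also flag explicitly what the computation does \emph{not} yield: the stronger identity \eqref{formu12} would require $\overline{M^{*}X}=\bar M^{*}\bar X$, i.e.\ a commutation of $M$ with the non-canonical Lyons--Victoir extension — precisely the point left open along the Hairer--Kelly route, in contrast with the Chapoton--Foissy isomorphism $\Psi$ of \cite{BC2019}, for which the analogous commutation does hold.
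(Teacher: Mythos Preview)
Your proof is correct and follows essentially the same route as the paper: start from the recursive formula \eqref{recursive_g}, rewrite $\langle X_{st},M\tau\rangle$ via the adjoint and Theorem~\ref{Hairer-Kelly} as $\langle\overline{M^{*}X_{st}},\tau\rangle+\langle\overline{M^{*}X_{st}},\psi_{|\tau|-1}(\tau)\rangle$, and then invoke Proposition~\ref{ext} to cancel the lower-order term against $\langle g\bar X_{st},\psi_{|\tau|-1}(\tau)\rangle$. The only difference is cosmetic ordering of the substitutions; your explicit verification of the hypothesis of Proposition~\ref{ext} is a welcome addition.
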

\begin{proof}
One has
\begin{align*}
  g_t^\tau-g_s^\tau & = \langle X_{st}, M \tau\rangle - \langle \bar X_{st},\tau\rangle - \langle g\bar{X}_{st},\psi_{|\tau|-1}(\tau)\rangle \\
 & =  \langle M^{*} X_{st}, \tau\rangle - \langle \bar X_{st},\tau\rangle - \langle \overline{M^{*}  X_{st}},\psi_{|\tau|-1}(\tau)\rangle \\
 & = \langle \overline{M^{*}  X_{st}},\tau \rangle - \langle \bar X_{st},\tau\rangle.
\end{align*}
where for the second line we have used Proposition~\ref{ext}.
\end{proof}

\begin{remark}
The formula~\eqref{formula11} gives a new perspective on the additive property observed for the action of the space $ \CC^{\gamma} $. Indeed, let $ g, \bar g, \tilde g \in \CC^{\gamma} $ such that $ \tilde g X = \bar g (gX) $. Then one has
\begin{equs}
\tilde g_t^\tau- \tilde g_s^\tau & = \langle \tilde g \bar  X_{st},\tau \rangle - \langle \bar X_{st},\tau\rangle \\
& = \langle \tilde g \bar  X_{st},\tau \rangle -  \langle g \bar  X_{st},\tau \rangle + \langle g \bar  X_{st},\tau \rangle- \langle \bar X_{st},\tau\rangle
\\ & = \bar g^{\tau}_t - \bar g^{\tau}_s +  g^{\tau}_t -  g^{\tau}_s,
\end{equs}
where for the first line we apply \eqref{formula11}. Then in the second line, we make appear a telescopic sum and we conclude by applying \eqref{formula11} twice in the third line.
\end{remark}

\begin{remark}
Now if $ \overline{M^*  X_{st}}$ and $ \bar M^* \bar X_{st} $ coincide then we get
\begin{equs}
 g_t^\tau-g_s^\tau= x_{t}^{M \tau - \tau} -  x_{s}^{M \tau - \tau}.
\end{equs}
We need to prove that the extension theorem  and the renormalisation commute. Such result is rather unclear. Indeed, one has
\begin{equs}
x_t^{M \tau} -  x_t^{M \tau}    = 
\langle  X_{st}, M \tau \rangle -\langle \bar M^*  \bar X^{(|\tau|-1)}_{st}, \psi_{|\tau|-1}(\tau) \rangle,
\end{equs}
where the extension is applied in the construction of $ \bar X $. Then, 
 for $ M \tau = \sum_i \lambda_i \tau_i $, one gets
\begin{equs} \label{taui}
x_t^{ \tau_i} -  x_t^{ \tau_i} = 
\langle  X_{st}, \tau_i \rangle -\langle   \bar X^{(|\tau_i|-1)}_{st}, \psi_{|\tau_i|-1}(\tau_i) \rangle
\end{equs}
and the extension theorem is applied to each of the $ \tau_i $.
On the other hand,
\begin{equs}
 \langle  \overline{M^{*}  X_{st}},\tau \rangle  =
\langle  X_{st}, M \tau \rangle -\langle  \overline{M^*   X}^{(|\tau|-1)}_{st}, \psi_{|\tau|-1}(\tau) \rangle
\end{equs}
where the extension is applied to $ M \tau $, linear combination of the $ \tau_i $, which marks a clear difference with \eqref{taui}.
\end{remark}
We conclude by presenting the nice interaction observed with the isomorphism $ \Psi $ which is in contrast with the use of the Hairer-Kelly map:
\begin{theorem} \label{commute_iso} One has the following identity:
\begin{equs}
\Psi(M^{\star} X) =  \tilde{M}^{*} \Psi( X).
\end{equs}
\end{theorem}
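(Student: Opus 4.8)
The plan is to reduce Theorem~\ref{commute_iso} to the purely algebraic commutation encoded in the square \eqref{cdiag2}, which has already been proved in Proposition~\ref{tildeM}. In contrast with the Hairer--Kelly route, $\Psi$ is a genuine Hopf algebra isomorphism $\CH_N\to T_N(\CB_N)$, so no appeal to the Lyons--Victoir extension theorem (nor to an analogue of Proposition~\ref{ext}) is needed: the renormalisation is simply transported along $\Psi$.

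First I would record how each side of the identity acts on (rough) paths. The renormalised branched rough path $M^\star X$ is determined by $\langle M^\star X_{st},\tau\rangle=\langle X_{st},M\tau\rangle$, that is $M^\star X_{st}=X_{st}\circ M$; it lies in $\BR^\gamma$ by the cointeraction property \eqref{cointeraction} together with the analytical conditions \eqref{analytical conditions}. The isomorphism $\Psi$ acts on a branched rough path $Y$ by $\langle\Psi(Y)_{st},v\rangle=\langle Y_{st},\Psi^{-1}(v)\rangle$ for $v\in T_N(\CB_N)$, and $\Psi(Y)\in\AN^\gamma$ by Theorem~\ref{chiso}; while $\tilde M^*$ acts on an anisotropic rough path $Z$ by $\langle\tilde M^*Z_{st},v\rangle=\langle Z_{st},\tilde M^*v\rangle$, preserving $\AN^\gamma$ by Proposition~\ref{tildeM}.

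With these conventions, testing against an arbitrary word $v\in T_N(\CB_N)$ yields
\begin{equs}
\langle\Psi(M^\star X)_{st},v\rangle=\langle X_{st},M\,\Psi^{-1}(v)\rangle
\qquad\text{and}\qquad
\langle\tilde M^*\Psi(X)_{st},v\rangle=\langle X_{st},\Psi^{-1}(\tilde M^*v)\rangle.
\end{equs}
Hence the statement is equivalent to the identity $M\circ\Psi^{-1}=\Psi^{-1}\circ\tilde M^*$ of linear maps $T_N(\CB_N)\to\CH_N$, equivalently $\Psi\circ M=\tilde M^*\circ\Psi$ on $\CH_N$. But this is precisely the commutation of the diagram \eqref{cdiag2} established in Proposition~\ref{tildeM} (there the cointeraction together with the unique decomposition \eqref{uniquedec} is used to propagate the identity through $\Psi$ inductively). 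Since both sides are genuine elements of $\AN^\gamma$ by the membership statements recalled above, the equality of characters concludes the proof.

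The step that requires attention is not a genuine obstacle but a matter of bookkeeping: one must make sure the three actions on rough paths above are normalised so that Theorem~\ref{commute_iso} is exactly the transpose of \eqref{cdiag2} --- in particular matching the adjoint conventions ($M$ versus its dual $M^*$, $\Psi$ versus $\Psi^{-1}$), and checking that, although $\tilde M^*$ is defined through the chosen basis $\CB_N$ (ordered so that $|\tau_i|\le|\tau_j|$ for $i\le j$), the composite $\Psi^{-1}\circ\tilde M^*$ is independent of that choice. Once this is pinned down the proof is the short computation displayed above; the substantive work lives in Proposition~\ref{tildeM}.
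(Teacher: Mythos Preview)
Your reduction to the commutative square \eqref{cdiag2} is exactly the paper's argument: the proof given there is the single sentence ``This is just an application of Proposition~\ref{tildeM}.'' You have simply unpacked what that application means at the level of pairings, which is fine. The only caveat is the bookkeeping you already flag: in the paper $\Psi$ is defined directly on $\CH_N^*$ (via $\tau_1\star\cdots\star\tau_r\mapsto\tau_1\otimes\cdots\otimes\tau_r$), so the diagram reads $\Psi\circ M^*=\tilde M^*\circ\Psi$ on $\CH_N^*$ rather than $\Psi\circ M=\tilde M^*\circ\Psi$ on $\CH_N$; applying this identity pointwise to $X_{st}\in\CG_N\subset\CH_N^*$ gives the theorem without any further dualisation.
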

\begin{proof}
This is just an application of Proposition~\ref{tildeM}.
\end{proof}

\bibliographystyle{./Martin}
% Ecrire la biblio ici
 \bibliography{refs_projet_recherche}

\end{document}